\def\doi#1{{\small\href{https://doi.org/#1}{\path{doi:#1}}}}
\def\arxiv#1{{\small\href{http://www.arxiv.org/abs/#1}{\path{arXiv:#1}}}}
\def\url#1{{\small\href{#1}{\path{#1}}}}
\theoremstyle{plain}
\newtheorem{theorem}{\bf Theorem}[section]
\newtheorem{lemma}[theorem]{\bf Lemma}
\newtheorem{corollary}[theorem]{\bf Corollary}
\theoremstyle{definition}
\newtheorem{example}[theorem]{\bf Example}
\newcommand{\N}{\mathbb N}
\newcommand{\Z}{\mathbb Z}
\newcommand{\R}{\mathbb R}
\newcommand{\Q}{\mathbb Q}
 \DeclareMathOperator{\ord}{ord}
 \DeclareMathOperator{\supp}{supp}
\DeclareMathOperator{\Int}{Int} \DeclareMathOperator{\End}{End}
\DeclareMathOperator{\canc}{canc}
\newcommand{\DP}{\negthinspace : \negthinspace}
\newcommand{\red}{{\text{\rm red}}}
\newcommand{\eq}{\text{\rm eq}}
\newcommand{\adj}{\text{\rm adj}}
\newcommand{\BF}{\text{\rm BF}}
\newcommand{\FF}{\text{\rm FF}}
\numberwithin{equation}{section}
\subjclass[2010]{20M13, 20M14; 13A05, 16D70, 16U30}
\thanks{This work was supported by the Austrian Science Fund FWF, Project P33499-N and  by the National Natural Science Foundation of China, Grant No. 12001331.}
\begin{document}

\title{A characterization of length-factorial Krull monoids}

\author{Alfred Geroldinger and  Qinghai Zhong}

\address{University of Graz, NAWI Graz \\
Institute for Mathematics and Scientific Computing \\
Heinrichstra{\ss}e 36\\
8010 Graz, Austria}
\address{School of Mathematics and statistics, Shandong University of Technology, Zibo, Shandong 255000, China}
\email{alfred.geroldinger@uni-graz.at,  qinghai.zhong@uni-graz.at}
\urladdr{https://imsc.uni-graz.at/geroldinger, https://imsc.uni-graz.at/zhong/}

\begin{abstract}
An atomic monoid is length-factorial if each two distinct factorizations of any element have distinct factorization lengths. We provide a characterization of length-factorial Krull monoids in terms of their class groups and the distribution of prime divisors in the classes.
\end{abstract}

\maketitle

%%%%%%%%%%%%%%%%%%%%%%%%%%%%%%%%%%%%%%%%%%%%%%%%%%%%%%%%%%%%%%%%%%%%%%%%%
%%                                      %%%%%%%%%%%%%%%
%%%%%%%%%%%%%%%%%%%%%%%%%%%%%%%%%%%%%%%%%%%%%%%%%%%%%%%%%%%%%%%%%%%%%%%%%

\section{Introduction and Main Results} \label{1}

By an atomic monoid, we mean a commutative  unit-cancellative semigroup with identity in which every non-invertible element is a finite product of irreducible elements. The monoids we have in mind stem from ring and module theory. An atomic monoid $H$ is said to be
\begin{itemize}
\item {\it half-factorial} if for every element $a \in H$ each two factorizations of $a$ have the same length;

\item {\it length-factorial} if for every element $a \in H$ each two distinct factorizations of $a$ have distinct lengths.
\end{itemize}
Thus, an atomic monoid is factorial if and only if it is half-factorial and length-factorial. A commutative ring is said to be atomic (half-factorial resp. length-factorial) if its monoid of regular elements has the respective property. All these arithmetical properties can be characterized in terms of catenary degrees. Indeed, it is easy to verify that a monoid is factorial (half-factorial resp. length-factorial) if its catenary degree $\mathsf c (H)=0$ (its adjacent catenary degree $\mathsf c_{\adj}(H)=0$ resp. its equal catenary degree $\mathsf c_{\eq} (H)=0$).
Half-factoriality has been studied since the beginning of factorization theory and there is a huge amount of literature.
Monotone and equal catenary degrees were first studied by Foroutan (\cite{Fo06a}), and for some recent contributions we refer to \cite{Ha09c, Ph15a, Ge-Gr-Sc-Sc10,  Ge-Yu13a, Ge-Re19d}. Length-factoriality was first studied (in different terminology) by Coykendall and Smith (\cite{Co-Sm11a}), who showed that an atomic integral domain is length-factorial if and only if it is factorial. However, such a result is far from being true in the monoid case (we refer to  recent contributions by Chapman, Coykendall, Gotti, and others \cite{C-C-G-S21, Go20a, Go20b, CM-Fo21a} as well as to  work on monoids that are not length-factorial \cite{C-G-L-M11, Ga-Ga-Ma22a}).

In the present paper we focus on Krull monoids. Krull monoids are atomic and they are factorial if and only if their class group is trivial.  Let $H$ be a Krull monoid with class group $G$ and let $G_P \subset G$ denote the set of classes containing prime divisors. Then $H$ is half-factorial if and only if the monoid of zero-sum sequences $\mathcal B (G_P)$ over $G_P$ is half-factorial. There is a standing conjecture that for every abelian group $G^*$ there is a half-factorial Krull monoid (equivalently, a half-factorial Dedekind domain) with class group isomorphic to $G^*$ (\cite[Section 5]{Gi06a}). The conjecture holds true for Warfield groups but not even for finite cyclic groups $G$ the structure or the maximal size of subsets $G_0 \subset G$, for which $\mathcal B (G_0)$ is half-factorial, are known in general (\cite{Pl-Sc05a, Pl-Sc05b}).

Our main result provides a characterization of when a Krull monoid is length-factorial, in terms of the class group and the distribution of prime divisors in the classes. Recall that reduced Krull monoids are uniquely determined by their class groups and by the distribution of prime divisors in the classes \cite[Theorem 2.5.4]{Ge-HK06a}.

\newpage
\begin{theorem} \label{1.1}
Let $H$ be a Krull monoid. Then $H = H^{\times} \times \mathcal F (P_0) \times H^*$, where $P_0$ is a set of representatives of prime elements of $H$, $\mathcal F (P_0) \times H^* \cong H_{\red}$, and $H^*$ is a reduced Krull monoid without primes. The class groups $\mathcal C (H)$ of $H$ and $\mathcal C (H^*)$ of $H^*$ are isomorphic, and $H$ is length-factorial if and only if $H^*$ is length-factorial. Let $G_{P^*} \subset \mathcal C (H^*)$ denote the set of classes containing prime divisors. \newline Then $H$ is length-factorial but not factorial if and only if every  class of $G_{P^*}$ contains precisely one prime divisor,  $H^* \cong \mathcal B (G_{P^*})$,
\[
G_{P^*}=\{e_{1,1},\ldots, e_{1,t}, e_{2,1},\ldots, e_{2,t}, \ldots, e_{k,1},\ldots, e_{k,t},g_1,\ldots, g_k,e_{0,1}, \ldots, e_{0,t}, g_0\}\,, \quad \text{and}
\]
\[
\mathcal C(H^*)=\langle e_{1,1},\ldots,e_{1,t},g_1 \rangle \oplus \ldots \oplus \langle e_{k,1},\ldots,e_{k,t},g_k \rangle\cong (\Z^t\oplus \Z/n\Z)^k \,,
\]
where
\begin{itemize} 	
\item  $t\in \N_0$, $k, s_0, s_1,\ldots,s_t\in \N$  with $k+1\neq s_0+s_1+\ldots+s_t\ge 2$,  independent elements \newline $e_{1,1},\ldots, e_{1,t}, e_{2,1},\ldots, e_{2,t}, \ldots, e_{k,1},\ldots, e_{k,t}\in \mathcal C(H^*)$ of infinite order and independent elements $g_1,\ldots, g_k\in \mathcal C(H^*)$, which are of infinite order in case $t>0$ and of finite order for $t=0$;
 	
\item $s_0$ is the smallest integer such that $s_0g_i\in \langle e_{i,1},\ldots,e_{i,t}\rangle$ and $-s_0g_i=s_1e_{i,1}+\ldots+s_te_{i,t}$ for every $i\in [1,k]$;
 		
\item $e_{0,j}=-\sum_{i=1}^ke_{i,j}$ for all $j\in [1,t]$,  $g_0=-\sum_{i=1}^kg_i$, and $n=\gcd(s_0,\ldots,s_t)$.
\end{itemize}
Moreover, $\mathcal C (H^*)$ is a torsion group if and only if $t=0$ and in that case we have $\mathcal C (H^*) \cong (\Z/n\Z)^k$, where $n\ge 2$, $k\in \N$ with $k+1\neq n$, and $\ord(g_i)=n$ for all $i\in [1,k]$.
\end{theorem}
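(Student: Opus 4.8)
The plan is to separate the statement into three layers: a structural reduction to a reduced Krull monoid without primes, a linear-algebraic reformulation of length-factoriality, and a combinatorial reconstruction of the class group from a single defining relation.

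First I would establish the decomposition and the reductions. Writing $H \cong H^\times \times H_\red$ and splitting the prime elements of $H_\red$ off as a free factor $\mathcal F(P_0)$ yields $H = H^\times \times \mathcal F(P_0)\times H^*$ with $H^*$ a reduced Krull monoid having no prime elements; since the split-off primes lie in the trivial class, passing through the divisor theories shows $\mathcal C(H)\cong \mathcal C(H^*)$. Because $\mathcal F(P_0)$ is factorial, every factorization of an element $(f,h)$ of $\mathcal F(P_0)\times H^*$ is the unique factorization of $f$ times a factorization of $h$, and lengths add; hence two distinct factorizations of $(f,h)$ have distinct lengths if and only if the corresponding factorizations of $h$ do. This gives that $H$ is length-factorial $\iff H^*$ is length-factorial, and reduces everything to $H^*$. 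Realizing $H^*$ as the saturated submonoid $\{a\in\mathcal F(\mathcal P^*): [a]=0\}$ of the free monoid over its prime divisors $\mathcal P^*$, with class map $[\,\cdot\,]\colon \mathcal F(\mathcal P^*)^{\mathsf{gp}}\to\mathcal C(H^*)$, I may work with the associated monoid of zero-sum sequences over the family of classes, the multiplicities recording the number of prime divisors per class.

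Next I would reformulate length-factoriality. Let $\mathcal A$ be the atoms of $H^*$, let $\pi\colon \Z^{(\mathcal A)}\to\mathsf q(H^*)$ send each atom to its class in the quotient group, let $\Lambda=\ker\pi$ be the relation lattice, and let $\ell\colon \Z^{(\mathcal A)}\to\Z$ be the length form $\ell(e_U)=1$. Writing any $\lambda\in\Lambda$ as $\lambda^+-\lambda^-$ exhibits two factorizations of one element differing exactly by $\lambda$, so $H^*$ is length-factorial if and only if $\Lambda\cap\ker\ell=\{0\}$, and it is factorial if and only if $\Lambda=0$. Since $\Lambda$ is free abelian, $\ell|_\Lambda$ is injective precisely when $\operatorname{rank}\Lambda\le 1$ with a length-nonzero generator; therefore $H^*$ is length-factorial but not factorial if and only if $\operatorname{rank}\Lambda=1$ and $\ell(\lambda_0)\ne 0$ for a generator $\lambda_0=z^+-z^-$ of $\Lambda$. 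Because the prime divisor classes generate $\mathcal C(H^*)$, the sequence $0\to\Lambda\to\Z^{(\mathcal A)}\to\mathsf q(H^*)\to 0$ together with $\mathsf q(H^*)=\ker([\,\cdot\,])$ gives the ``defect one'' equation $|\mathcal A|=|\mathcal P^*|-\operatorname{rank}_0\mathcal C(H^*)+1$ (once finiteness of $\mathcal P^*$ is secured below), while $\ell(\lambda_0)\ne 0$ records that the two sides of the unique fundamental relation have different length.

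Finally, the main work is to classify the configurations with $\operatorname{rank}\Lambda=1$ and to match them with the asserted grid. I would first show that no class carries two prime divisors: the transposition $\tau$ exchanging two prime divisors of one class is a length-preserving monoid automorphism of $H^*$, hence fixes $\Lambda=\Z\lambda_0$, and combined with ``swap'' identities of the form $U_0U_2=U_1^2$ produced from an atom in which such a class occurs this forces either a second independent relation (so $\operatorname{rank}\Lambda\ge 2$) or an equal-length pair of distinct factorizations, contradicting length-factoriality; this yields $H^*\cong\mathcal B(G_{P^*})$ with $G_{P^*}$ a genuine subset and $0\notin G_{P^*}$. It then remains to read the grid off the single binomial relation $\prod_{i=0}^k A_i = W^{s_0}\prod_{j=1}^t V_j^{s_j}$: the atoms split into the $k+1$ ``column'' atoms $A_0,\dots,A_k$ and the $t+1$ ``row'' atoms $W,V_1,\dots,V_t$, the prime divisors organize into the $(k+1)\times(t+1)$ grid $\{g_i\}\cup\{e_{i,j}\}$, and the zero-sum conditions on these atoms give exactly $\sum_{i=0}^k g_i=0$, $\sum_{i=0}^k e_{i,j}=0$ and $s_0g_i+\sum_j s_je_{i,j}=0$. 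Presenting $\mathcal C(H^*)$ by these relations (eliminating $g_0$ and the $e_{0,j}$) produces $\bigoplus_{i=1}^k \Z^{t+1}/(s_0,\dots,s_t)\cong(\Z^t\oplus\Z/n\Z)^k$ with $n=\gcd(s_0,\dots,s_t)$, and $\ell(\lambda_0)=(k+1)-(s_0+\dots+s_t)\ne 0$ gives $k+1\ne s_0+\dots+s_t$; the torsion case $t=0$ then falls out directly. I expect the reconstruction of the grid from the abstract rank-one relation — proving that $G_{P^*}$ is finite, that the atom supports must have this bipartite incidence, and that all multiplicities equal $1$ — to be the main obstacle, requiring the most delicate bookkeeping, especially in separating the infinite-order and torsion contributions.
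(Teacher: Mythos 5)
Your reduction to $H^*$ and your reformulation of length-factoriality are correct and genuinely different in flavour from the paper's argument: encoding pairs of factorizations in the relation lattice $\Lambda$ of the atoms and observing that length-factoriality means the length form $\ell$ is injective on $\Lambda$, so that ``length-factorial but not factorial'' is exactly ``$\Lambda$ is infinite cyclic with a generator $\lambda_0$ of nonzero length'', is a valid and attractive starting point (the paper instead manipulates explicit pairs of factorizations throughout, via its Lemmas 3.2 and 3.3). The difficulty is that essentially all of the content of the theorem is hidden in your sentence ``it then remains to read the grid off the single binomial relation'', and you explicitly defer that step. Three things are asserted there but not proved. First, that every class contains at most one prime divisor: your swap-identity argument only applies when some atom contains the doubled class $\{p,q\}$ with total multiplicity at least two; in the remaining case, where every atom meets that class with multiplicity exactly one, you need a different equal-length relation (the paper uses $a_1\cdot\tau(a_2)=\tau(a_1)\cdot a_2$ for two distinct atoms $a_1,a_2$ divisible by $p$, whose existence comes from the divisor-theory property $p=\gcd(a_1,\ldots,a_r)$), and the remark that the transposition $\tau$ fixes $\Lambda$ does not by itself produce a contradiction. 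Second, that $\mathcal A(G_{P^*})$ is finite and consists precisely of the atoms occurring in $\lambda_0$: this is not automatic from $\operatorname{rank}\Lambda=1$; it is the paper's Lemma 3.2.2 and requires the hypothesis $\mathcal C(H^*)=[G_{P^*}\setminus\{g\}]$ for all $g\in G_{P^*}$, which in turn comes from the divisor theory together with the single-prime-per-class statement.

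Third, and most importantly, the shape of the generator itself: that one side of the relation is squarefree (each $U_i$ occurs exactly once), that the supports of distinct $U_i$ are pairwise disjoint, that every $g\in G_{P^*}$ lies in exactly two atoms and occurs with multiplicity one in the $V_j$, and that the $e_{i,j}$ are independent of infinite order with the stated direct-sum decomposition and $n=\gcd(s_0,\ldots,s_t)$ equal to the maximal order of a torsion element. This is the bulk of the paper's Step~1 (the gcd estimate forcing $|\gcd(U_i,V_j)|=1$, the disjointness and multiplicity-one arguments via Lemma 3.3, and the two independence arguments), and none of it is replaced by the rank-one observation; you correctly identify it as the main obstacle but leave it open. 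Finally, the converse direction --- verifying that the exhibited $G_{P^*}$ really gives a length-factorial $\mathcal B(G_{P^*})$, i.e., that its atom set is exactly $\{U_0,\ldots,U_k,V_0,\ldots,V_t\}$ and that its relation lattice is generated by the single relation of unequal lengths --- is not addressed. As it stands the proposal is a sound framework with the decisive combinatorial classification missing.
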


\smallskip
Theorem \ref{1.1} shows in particular that, if $H$ is a length-factorial Krull monoid, then $H^*$ is finitely generated Krull with torsion-free quotient group, whence $H^*$ is a normal affine monoid in the sense of combinatorial commutative algebra (\cite{Br-Gu09a}).
We proceed with a series of corollaries. Based on the algebraic characterization of length-factorial Krull monoids given in Theorem \ref{1.1}, we start with the description of their arithmetic. We explicitly determine the system   $\mathcal L (H)$ of sets of lengths, which has been done only in seldom cases (\cite{Ge-Sc-Zh17b}). In particular,  the set of distances and the  elasticity  are finite (a geometric characterization of when the elasticity of Krull monoids with finitely generated class group are finite can be found in \cite{Gr22a}). Moreover, we observe that $\mathcal L (H)$ is additively closed, a   quite rare property (\cite{Ge-Sc21a}).

\smallskip
\begin{corollary}[{\bf Arithmetic of length-factorial Krull monoids}] \label{1.2}
Let $H$ be a length-factorial Krull monoid, that is not factorial,  and let all notation be as in Theorem \ref{1.1}.
\begin{enumerate}
\item The inclusion $\mathcal B (G_{P^*}) \hookrightarrow \mathcal F (G_{P^*})$ is a divisor theory with class group isomorphic to $\mathcal C (H)$. The set of atoms $\mathcal A (G_{P^*}) = \{U_0,\ldots, U_k, V_0,\ldots, V_t\}$ where, for every $i\in [0,k]$ and  every $j\in [1,t]$,
      \[
      U_0=g_0^{s_0}e_{0,1}^{s_1} \cdot \ldots \cdot e_{0,t}^{s_t}, \quad  U_i=e_{i,0}^{s_0} \cdot \ldots \cdot e_{i,t}^{s_t}, \quad V_0=g_0 \cdot \ldots \cdot g_k, \quad V_j=e_{0,j} \cdot \ldots \cdot e_{k,j} \,
      \]
      and  $U_0 \cdot \ldots \cdot U_k=V_0^{s_0} \cdot \ldots \cdot V_t^{s_t}$.

\item Every $B\in \mathcal B(G_{P^*})$ can be  written uniquely in the form
      \[
      B =(U_0 \cdot \ldots \cdot U_k)^x\ \prod_{i=0}^kU_i^{y_i} \ \prod_{j=1}^tV_j^{z_j}\,,
      \]
      where $x, y_0,\ldots, y_k, z_0,\ldots,z_t \in \N_0$,   $y_i=0$ for some  $i\in [0,k]$,  and   $z_j<s_j$ for some $j\in [0,t]$. Furthermore, we have
      \[
      \mathsf L(B) =\sum_{i=0}^ky_i + \sum_{j=0}^t z_j + \Big\{\nu (k+1)+(x-\nu)\sum_{j=0}^ts_j\colon \nu \in [0,x] \Big\}\,.
      \]
 	
\item For the system of sets of lengths $\mathcal L (H)$, we have
      \[
      \mathcal L(H)= \Big\{  \big\{y+ \nu (k+1)+(x-\nu)\sum_{j=0}^ts_j\colon \nu \in [0,x] \big\} \colon y,x\in \N_0 \Big\} \,.
      \]
      In particular, the system  $\mathcal L (H)$    is additively closed with respect to set addition as operation.
\end{enumerate}
\end{corollary}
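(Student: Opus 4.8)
The plan is to reduce everything to the monoid of zero-sum sequences $\mathcal B(G_{P^*})$ and to compute inside the free abelian monoid $\mathcal F(G_{P^*})$. Throughout I write $e_{i,0}:=g_i$, so that $G_{P^*}=\{e_{i,j}:i\in[0,k],\,j\in[0,t]\}$, and for $B\in\mathcal F(G_{P^*})$ I set $a_{i,j}:=\mathsf v_{e_{i,j}}(B)\in\N_0$. Since $H^*$ is a reduced Krull monoid with $H^*\cong\mathcal B(G_{P^*})$ in which every class contains exactly one prime divisor, the inclusion $\mathcal B(G_{P^*})\hookrightarrow\mathcal F(G_{P^*})$ is, by the general structure theory of Krull monoids (\cite{Ge-HK06a}), precisely the divisor theory of $H^*$, with class group $\langle G_{P^*}\rangle=\mathcal C(H^*)\cong\mathcal C(H)$; this settles the first assertion of part (1). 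That $U_0,\dots,U_k,V_0,\dots,V_t$ are atoms satisfying $U_0\cdots U_k=V_0^{s_0}\cdots V_t^{s_t}$ I would verify directly: each is a zero-sum sequence by the defining relations $s_0g_i=-\sum_j s_je_{i,j}$, $e_{0,j}=-\sum_{i\ge1}e_{i,j}$ and $g_0=-\sum_{i\ge1}g_i$; minimality of each $V_j$ follows from the independence of the $e_{i,j}$, and minimality of each $U_i$ from the defining minimality of $s_0$ together with that independence; the displayed relation is checked by comparing multiplicities in $\mathcal F(G_{P^*})$.

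The arithmetical core is a single multiplicity computation. Using $e_{0,j}=-\sum_{i\ge1}e_{i,j}$ and the decomposition $\mathcal C(H^*)=\bigoplus_{i=1}^k\langle e_{i,0},\dots,e_{i,t}\rangle$, I would show
\[
\sigma(B)=\sum_{i=1}^k\sum_{j=0}^t (a_{i,j}-a_{0,j})\,e_{i,j}\,,
\]
so that $B$ is a zero-sum sequence if and only if, for every $i\in[1,k]$, the vector $(a_{i,j}-a_{0,j})_{j=0}^t$ lies in the relation lattice $\Z(s_0,\dots,s_t)$ of the $i$-th summand; this is in turn equivalent to the solvability over $\Z$ of the system $a_{i,j}=w_is_j+z_j$ for $i\in[0,k]$, $j\in[0,t]$. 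A direct check shows that the representation $B=(U_0\cdots U_k)^x\prod_iU_i^{y_i}\prod_jV_j^{z_j}$ realizes exactly $a_{i,j}=(x+y_i)s_j+z_j$, i.e.\ $w_i=x+y_i$. Existence of a nonnegative solution proves that $U_0,\dots,U_k,V_0,\dots,V_t$ generate $\mathcal B(G_{P^*})$; since each is already an atom, they constitute all the atoms, completing part (1).

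For the uniqueness and the length formula in part (2), I would analyse the fiber of $(w_i,z_j)\mapsto(a_{i,j})$. Its kernel on $\Z$-spans is the one-dimensional lattice spanned by $(1,\dots,1;-s_0,\dots,-s_t)$, which is exactly the relation vector, so any two preimages differ by an integer multiple of it. Normalizing by ``$z_j\ge0$ for all $j$ and $z_j<s_j$ for some $j$'' singles out a unique $(w_i,z_j)$, and one checks that this forces $w_i\ge0$ automatically (evaluate $a_{i,j}=w_is_j+z_j$ at an index $j$ with $z_j<s_j$), after which $x:=\min_iw_i$ and $y_i:=w_i-x$ give the asserted unique normal form. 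The nonnegative preimages are then obtained from the normal form by converting $\mu\in[0,x]$ copies of $U_0\cdots U_k$ into $V_0^{s_0}\cdots V_t^{s_t}$; each such factorization has length $\sum_iy_i+\sum_jz_j+(k+1)(x-\mu)+\mu\sum_js_j$, and reindexing $\nu=x-\mu$ yields the stated set $\mathsf L(B)$.

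Part (3) is then pure bookkeeping. Writing $y:=\sum_iy_i+\sum_jz_j$ shows every $\mathsf L(B)$ has the claimed shape, and $B=(U_0\cdots U_k)^xU_0^{\,y}$ realizes every pair $(x,y)\in\N_0^2$; since units and the factorial factor $\mathcal F(P_0)$ only shift sets of lengths, one has $\mathcal L(H)=\mathcal L(\mathcal B(G_{P^*}))$, which is then exactly as stated. Additive closure follows because, with $S:=\sum_js_j$, the sumset of two such sets equals $\{(y+y')+\nu(k+1)+((x+x')-\nu)S:\nu\in[0,x+x']\}$, using $[0,x]+[0,x']=[0,x+x']$ and that the summand depends only on $\nu+\nu'$. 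The main obstacle is the central computation of the second paragraph: correctly reducing the zero-sum condition to the system $a_{i,j}=w_is_j+z_j$ through the direct-sum decomposition and the relation lattice $\Z(s_0,\dots,s_t)$, and arranging the normalization so that nonnegativity of the $w_i$ comes for free; once this linear algebra is in place the remaining steps are routine.
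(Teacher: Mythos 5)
Your argument is correct and reaches all three assertions, but the engine driving part (2) is genuinely different from the paper's. The paper works entirely inside the factorization monoid: it takes an arbitrary $z=\prod U_i^{c_i}\prod V_j^{d_j}\in\mathsf Z(B)$ (existence of the atom list being imported from Equation \eqref{structure of atoms} in Step~2 of the proof of Theorem \ref{1.1}), extracts $x_1=\min_i c_i$ and $x_2=\min_j\lfloor d_j/s_j\rfloor$ to reach the normal form, and then proves uniqueness by a divisibility argument using $\supp(U_{i_1})\cap\supp(U_{i_2})=\emptyset$ and the fact that no $U_i$ divides $\prod_j V_j^{z_j'}$. You instead linearize: you encode $B$ by its multiplicity vector $(a_{i,j})$, reduce the zero-sum condition through the direct-sum decomposition to the solvability of $a_{i,j}=w_is_j+z_j$ over $\Z$, and get existence, uniqueness and the full fibre $\mathsf Z(B)$ in one stroke from the rank-one kernel spanned by $(1,\dots,1;-s_0,\dots,-s_t)$. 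This buys you more than the paper states (an independent re-derivation of $\mathcal A(G_{P^*})$ and the exact count $|\mathsf Z(B)|=x+1$, reconfirming length-factoriality), at the price of having to justify that the relation lattice of each summand $\langle g_i,e_{i,1},\dots,e_{i,t}\rangle$ is exactly $\Z(s_0,\dots,s_t)$ — which does follow from the minimality of $s_0$ in Theorem \ref{1.1} (and from $\ord(g_i)=n=s_0$ when $t=0$), and your normalization trick (forcing $0\le z_{j_0}<s_{j_0}$ to get $w_i\ge 0$ for free) is sound. Parts (1) and (3) proceed essentially as in the paper.
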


\smallskip
Next we consider Krull monoids having some key properties, namely  the approximation property or the property that every class contains at least one prime divisor. All Krull domains have the approximation property. Holomorphy rings in global fields are Dedekind domains with finite class group and infinitely many prime divisors in all classes. Cluster algebras that are Krull (\cite{Ga-La-Sm19a}) and monoid algebras that are Krull (\cite{Fa-Wi21b}) are more recent examples of Krull domains having infinitely many prime divisors in all classes. Examples of Krull monoids stemming from module theory and having prime divisors in all classes will be discussed in Section \ref{2}.
Corollary \ref{1.3} should be compared with the classical result that a  Krull monoid having  prime divisors in each class is  half-factorial if and only if its class group has at most two elements.

\smallskip
\begin{corollary} \label{1.3}
Let $H$ be a Krull monoid and $H^*$ be as in Theorem \ref{1.1}.
\begin{enumerate}
\item If $H$ satisfies the approximation property, then $H$ is length-factorial if and only if it is factorial.

\item Suppose that every nonzero class of $H$ contains a prime divisor. Then $H$ is length-factorial  if and only if $H^* \cong \mathcal B ( \mathcal C (H) \setminus \{0\})$ and $\big( |\mathcal C (H)| \le 3$  or $\mathcal C (H)$ is an elementary $2$-group of rank two$\big)$.
\end{enumerate}
\end{corollary}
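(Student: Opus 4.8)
The plan is to reduce both statements to the monoid $H^*$ and the finite set $G_{P^*}$ furnished by Theorem \ref{1.1}, and then to read the assertions off the explicit list of classes containing prime divisors. Throughout I use that $H$ is length-factorial if and only if $H^*$ is, that $\mathcal C(H)\cong \mathcal C(H^*)$, and that in the length-factorial but not factorial case every class of $G_{P^*}$ carries precisely one prime divisor while $H^*\cong \mathcal B(G_{P^*})$. I also use, from the decomposition in Theorem \ref{1.1}, that the prime divisors of $H$ lying in nonzero classes are exactly the prime divisors of $H^*$. Since every factorial monoid is length-factorial, in each part only the forward implication and the sharp counting require an argument.

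For part (1), the implication ``factorial $\Rightarrow$ length-factorial'' is immediate, so I would treat the converse. Suppose $H$ has the approximation property and is length-factorial but, for contradiction, not factorial. By Theorem \ref{1.1}, $G_{P^*}$ is a \emph{finite} set of nonzero classes, each containing exactly one prime divisor; hence $H$ has only finitely many prime divisors outside the zero class. The approximation property (see \cite{Ge-HK06a}) provides, for any finite family of prime divisors and any prescribed nonnegative values, an element of $H$ realizing them. Applying this to the finite family of \emph{all} prime divisors in nonzero classes, with value $1$ at the unique prime $p$ of some class $g\in G_{P^*}$ and value $0$ at every other such prime, one obtains $a\in H$ whose class is $g$, because prime divisors in the zero class contribute nothing to the class of $a$. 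As $g\neq 0$, this contradicts $a\in H$, so $H$ is factorial. The one point needing care, and where the hypothesis genuinely enters, is matching the precise form of the approximation property to this valuation prescription; the finiteness of $G_{P^*}$ is what makes the prescription possible.

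For part (2), assume first that $H$ is length-factorial and that every nonzero class of $\mathcal C(H)$ contains a prime divisor. If $H$ is factorial then $\mathcal C(H)$ is trivial and there is nothing to prove, so assume $H$ is not factorial and apply Theorem \ref{1.1}. Since $G_{P^*}$ consists of the nonzero classes containing a prime divisor, the hypothesis gives $G_{P^*}=\mathcal C(H)\setminus\{0\}$ and hence $H^*\cong \mathcal B(\mathcal C(H)\setminus\{0\})$. In particular $G_{P^*}$ is finite, so $\mathcal C(H^*)$ is finite; by the final sentence of Theorem \ref{1.1} this forces $t=0$, whence $\mathcal C(H)\cong (\Z/n\Z)^k$ with $n\ge 2$, $k\ge 1$ and $k+1\neq n$. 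For $t=0$ the displayed description of $G_{P^*}$ reduces to the $k+1$ elements $g_1,\ldots,g_k,g_0$, so comparing cardinalities yields
\[
k+1=|G_{P^*}|=|\mathcal C(H)|-1=n^k-1,\qquad\text{that is}\qquad n^k=k+2 .
\]
Because $n^k\ge 2^k>k+2$ for all $k\ge 3$, only $k\in\{1,2\}$ survive, giving $(n,k)=(3,1)$ and $(n,k)=(2,2)$, both respecting $k+1\neq n$. Thus $\mathcal C(H)$ is cyclic of order $3$ or an elementary $2$-group of rank two, i.e.\ $|\mathcal C(H)|\le 3$ or $\mathcal C(H)\cong(\Z/2\Z)^2$.

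For the converse in part (2), suppose $H^*\cong \mathcal B(\mathcal C(H)\setminus\{0\})$ together with the stated restriction on $\mathcal C(H)$; by the reduction it suffices to check that $H^*$ is length-factorial. If $|\mathcal C(H)|\le 2$ then $\mathcal B(\mathcal C(H)\setminus\{0\})$ is trivial or free on the single atom $g^2$, hence factorial. If $\mathcal C(H)$ is cyclic of order $3$ or isomorphic to $(\Z/2\Z)^2$, then the data $(t,k,n)=(0,1,3)$ respectively $(0,2,2)$ satisfy all hypotheses of Theorem \ref{1.1} (in particular $k+1\neq n$), so the corresponding $\mathcal B(\mathcal C(H)\setminus\{0\})$ is one of the monoids characterized there as length-factorial. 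This settles both directions. The arithmetic behind $n^k=k+2$ is routine and the converse is a matter of matching the small cases to Theorem \ref{1.1}; as noted above, the only genuinely delicate step is the use of the approximation property in part (1).
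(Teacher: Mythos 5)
Your proof is correct and follows essentially the same route as the paper: both parts reduce to Theorem \ref{1.1}, part (1) derives a contradiction from the finiteness of $P^*$ together with the approximation property, and part (2) reads the admissible groups off the explicit description of $G_{P^*}$ via the count $n^k-1=k+1$. The only (harmless) variations are that you invoke the valuation-prescribing form of the approximation property where the paper appeals to its gcd formulation, and that you verify length-factoriality of $\mathcal B(C_3\setminus\{0\})$ and $\mathcal B((C_2\oplus C_2)\setminus\{0\})$ by matching the data $(t,k,n)=(0,1,3)$ and $(0,2,2)$ to the sufficiency direction of Theorem \ref{1.1} instead of the paper's direct check.
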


\smallskip
As already said before, it was proved by Coykendall and Smith that a commutative integral domain is length-factorial if and only if it is factorial  (\cite{Co-Sm11a}). Our next corollary shows that this result remains true for commutative Krull rings with zero divisors and for normalizing (but not necessarily commutative) Krull rings.

\smallskip
\begin{corollary}[{\bf Length-factorial Krull rings}] \label{1.4}~

\begin{enumerate}
\item Let $R$ be an additively regular Krull ring. Then $R$ is length-factorial if and only if $R$ is factorial.

\item Let $R$ be a normalizing Krull ring. Then $R$ is length-factorial if and only if $R$ is factorial.
\end{enumerate}
\end{corollary}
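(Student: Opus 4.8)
The plan is to deduce both statements from the monoid-theoretic Corollary~\ref{1.3}(1) by passing from $R$ to a commutative Krull monoid that faithfully records its factorizations and that inherits the approximation property. Throughout I would use two standard facts: a Krull ring is factorial exactly when its divisor class group is trivial, and a factorial monoid is automatically length-factorial; thus only the implication ``length-factorial $\Rightarrow$ factorial'' needs an argument.

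For part~(1), let $R$ be an additively regular commutative Krull ring and let $R^{\bullet}$ denote its monoid of regular elements. By the very definition of these notions for rings, $R$ is length-factorial (resp.\ factorial) if and only if $R^{\bullet}$ is. Now $R^{\bullet}$ is a commutative Krull monoid whose divisor theory is induced by the essential valuations of $R$ and whose class group is the divisor class group of $R$; the role of additive regularity is precisely to guarantee that $R^{\bullet}$ is Krull and that the approximation theorem familiar from Krull domains carries over, so that finitely many essential valuations can be prescribed simultaneously by a regular element. Hence $R^{\bullet}$ satisfies the approximation property, and Corollary~\ref{1.3}(1) gives that $R^{\bullet}$, and with it $R$, is length-factorial if and only if it is factorial.

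For part~(2), let $R$ be a normalizing Krull ring and let $H$ be its monoid of regular normalizing elements, a (normalizing) Krull monoid with a divisor theory into a free abelian monoid, with class group the normalizing divisor class group of $R$, and with set of prime divisors inherited from $R$. To this divisor-theoretic data I would attach the commutative Krull monoid $H'$ having the same prime divisors and the same class group; then $H'$ inherits the approximation property, and the reduction of normalizing Krull monoids to their commutative models lets one transport the factorization behaviour of $H$ (equivalently, of $R$) to $H'$. In particular length-factoriality of $R$ yields length-factoriality of $H'$, and a further application of Corollary~\ref{1.3}(1) makes the class group trivial, so that $R$ is factorial.

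The closing appeal to Corollary~\ref{1.3}(1) is routine; the real content is in the two reductions. For~(1) one must check that additive regularity indeed secures the approximation property for $R^{\bullet}$, and for~(2) one must bring the factorization arithmetic of the possibly noncommutative ring $R$ down to the commutative monoid $H'$. I expect the latter to be the main obstacle: length-factoriality is a statement about the number of factorizations of each length, not merely about the set of lengths, so the passage from the normalizing ring to its commutative model has to preserve the distinctness of factorizations in the direction required to conclude factoriality, rather than only the invariance of sets of lengths that a transfer homomorphism supplies for free.
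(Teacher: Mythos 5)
Your overall strategy---reduce both parts to Corollary~\ref{1.3}.1 via the approximation property---is exactly the paper's. Part~(1) is fine as an outline: the one fact you leave unverified, namely that additive regularity secures the approximation property for $R^{\bullet}$, is precisely Osmanagi\'c's approximation theorem for Krull rings with zero divisors (\cite[Theorem 2.2]{Os99a}), which is what the paper cites; nothing further is needed there.

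In part~(2), however, the step you yourself flag as the main obstacle is a genuine gap, and your proposed route does not close it. You introduce a commutative Krull monoid $H'$ ``with the same prime divisors and the same class group'' and then want to ``transport the factorization behaviour'' of $H$ to $H'$; but, as you correctly observe, a transfer homomorphism only preserves sets of lengths, not the number of factorizations of a given length, so this transport is exactly what must be justified---and in your write-up it is not. The paper sidesteps the issue entirely: by \cite[Corollary 4.14 and Proposition 5.1]{Ge13a}, for a normalizing Krull ring the \emph{reduced} monoid $H_{\red}$ of the monoid $H$ of regular elements is itself a commutative Krull monoid. Since factorizations of $H$ are by definition elements of $\mathsf Z (H) = \mathcal F \big( \mathcal A (H_{\red}) \big)$, length-factoriality (and factoriality) of $H$ and of $H_{\red}$ are literally the same statement; there is no transfer homomorphism in the picture and no information about the multiplicity of factorizations can be lost. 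Combined with the approximation theorem for normalizing Krull rings (\cite[Proposition 2.9]{Le-Oy86}, \cite[Theorem 4]{Ma82a}), Corollary~\ref{1.3}.1 then applies directly to $H_{\red}$. So the missing ingredient in your argument is not a cleverer transport lemma but the identification of your $H'$ with $H_{\red}$ together with the cited fact that this reduced monoid is a commutative Krull monoid.
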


\smallskip
We end with a corollary on transfer Krull monoids.
A monoid $H$ is said to be {\it transfer Krull} if there is a transfer homomorphism $\theta \colon H \to B$, where $B$ is a Krull monoid. Thus, Krull monoids are transfer Krull, with $\theta$ being the identity. However, in general,  transfer Krull monoids need neither be cancellative nor completely integrally closed nor $v$-noetherian. We discuss an example after the proof of Corollary \ref{1.5} (Example \ref{2.7}) and  refer to the survey \cite{Ge-Zh20a} for more. In particular, all half-factorial monoids are transfer Krull but not necessarily Krull. But reduced length-factorial transfer Krull monoids are Krull, as we show in our final corollary.

\smallskip
\begin{corollary}[{\bf Length-factorial transfer Krull monoids}] \label{1.5}
Let $H$ be a  transfer Krull monoid. If $H$ is length-factorial, then $H_{\red}$ is Krull whence it fulfills the structural description given in Theorem \ref{1.1}.	
\end{corollary}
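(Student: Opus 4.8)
The plan is to reduce everything to the reduced case and then transport the structure supplied by Theorem \ref{1.1} from the target of the transfer homomorphism back to $H$ itself. Since sets of factorizations depend only on the associated reduced monoid, $H$ is length-factorial if and only if $H_{\red}$ is, and any transfer homomorphism $\theta \colon H \to B$ onto a Krull monoid $B$ descends to a transfer homomorphism $H_{\red} \to B_{\red}$ onto the reduced Krull monoid $B_{\red}$. Thus I may assume from the outset that $H$ and $B$ are reduced, that $B$ is Krull, and that $\theta \colon H \to B$ is a transfer homomorphism. Throughout I use the standard facts (\cite{Ge-HK06a}) that $\theta$ preserves sets of lengths, that it maps atoms to atoms and that every atom of $B$ has an atomic preimage, and that for every $a \in H$ the induced length-preserving map $\mathsf Z_H(a) \to \mathsf Z_B(\theta(a))$ is surjective.

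First I would show that $B$ is again length-factorial: if $z_1, z_2 \in \mathsf Z_B(\theta(a))$ have equal length, I lift them along the surjection to $\tilde z_1, \tilde z_2 \in \mathsf Z_H(a)$ of the same length, and length-factoriality of $H$ forces $\tilde z_1 = \tilde z_2$, hence $z_1 = z_2$. Now I split into the two cases of Theorem \ref{1.1} applied to $B$. If $B$ is factorial, then $\mathcal L(H) = \mathcal L(B)$ consists only of singletons, so $H$ is half-factorial; being also length-factorial, $H$ is factorial, hence free abelian and in particular Krull, which is the degenerate case of Theorem \ref{1.1}.

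The substantial case is when $B$ is length-factorial but not factorial. Here Theorem \ref{1.1} and Corollary \ref{1.2} give me the full list of atoms $\mathcal A(B) = \{U_0, \dots, U_k, V_0, \dots, V_t\}$ together with the single defining relation $U_0 \cdots U_k = V_0^{s_0} \cdots V_t^{s_t}$, in which every atom of $B$ occurs. I would prove that $\theta$ is injective on atoms. Suppose some atom, say $U_0$ (a doubled $V_j$ being entirely analogous), had two distinct atomic preimages $u \neq u'$. Fixing atomic preimages $u_1, \dots, u_k$ of $U_1, \dots, U_k$, the elements $c_u = u\,u_1 \cdots u_k$ and $c_{u'} = u'\,u_1 \cdots u_k$ are distinct and both map to $U_0 \cdots U_k = V_0^{s_0}\cdots V_t^{s_t}$; lifting the right-hand factorization along the surjection yields factorizations $\zeta$ of $c_u$ and $\zeta'$ of $c_{u'}$, each of length $s_0 + \cdots + s_t$, with $\zeta \neq \zeta'$ because $c_u \neq c_{u'}$. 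Setting $a = u\,u'\,u_1^2 \cdots u_k^2$, the two products $\zeta \cdot (u'\,u_1 \cdots u_k)$ and $(u\,u_1 \cdots u_k) \cdot \zeta'$ are then factorizations of $a$ of the same length $(k+1) + (s_0 + \cdots + s_t)$, and they are distinct because the former contains the atom $u'$ while the latter does not. This contradicts length-factoriality of $H$, so $\theta$ is injective on atoms, and being also surjective onto $\mathcal A(B)$ it is bijective on atoms.

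Finally, a transfer homomorphism of reduced atomic monoids that is bijective on atoms is an isomorphism: given $\theta(a) = \theta(a')$, I pick $z \in \mathsf Z_H(a)$, lift $\theta(z) \in \mathsf Z_B(\theta(a')) $ back to some $z' \in \mathsf Z_H(a')$ with $\theta(z') = \theta(z)$, and since $\theta$ is a bijection on atoms the two factorizations $z, z'$ have the same multiset of atoms, whence $z = z'$ and $a = a'$. Thus $H \cong B$ is Krull and $H_{\red}$ fulfills the description of Theorem \ref{1.1}. I expect the crux to be the injectivity step in the non-factorial case: the factorial example $\N_0^2 \to \N_0$ with $x, y \mapsto z$ shows that a length-factorial transfer Krull monoid \emph{can} admit a non-injective transfer homomorphism, so it is essential that non-factoriality supplies a defining relation which turns a split atom into an equal-length collision of factorizations.
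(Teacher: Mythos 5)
Your overall strategy is recognizably the paper's: show $B$ is length-factorial, invoke Theorem \ref{1.1}, and use the single relation among the finitely many non-prime atoms to manufacture two distinct equal-length factorizations from a ``split'' atom. But there is a genuine gap in the substantial case. Theorem \ref{1.1} does \emph{not} say that $\mathcal A(B)=\{U_0,\dots,U_k,V_0,\dots,V_t\}$; it says $B=B^{\times}\times\mathcal F(Q_0)\times B^*$ with $Q_0$ a set of (representatives of) prime elements, and only $\mathcal A(B^*)$ is that finite set. A prime atom $q\in Q_0$ occurs in no nontrivial relation, so your collision argument says nothing about it, and such a $q$ really can have several atomic preimages: take $H=\mathcal F(\{q_1,q_2\})\times\mathcal B(G_0)$, $B=\mathcal F(\{q\})\times\mathcal B(G_0)$ with $\theta$ collapsing $q_1,q_2\mapsto q$ and the identity on the second factor. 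This $H$ is length-factorial, not factorial, and Krull, yet $\theta$ is not injective on atoms and $H\not\cong B$. So the exact phenomenon you flag in your closing remark for the factorial case reappears, as a free factor, in the non-factorial case; your claims ``$\theta$ is injective on atoms'' and ``$H\cong B$'' are false in general, even though the corollary's conclusion still holds. The repair is what the paper does: split off the primes, $H=\mathcal F(P_0)\times H_0$, show $\theta$ carries $H_0$ onto the prime-free part $\mathcal B(G_0\setminus\{0\})$, and prove injectivity only there (your split-atom argument is essentially the paper's argument for that restricted injectivity).

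A second gap: you never establish that $H$ is cancellative, which is part of being Krull. In this paper a monoid is only unit-cancellative, and transfer Krull monoids need not be cancellative; the paper devotes a separate assertion {\bf A} to deducing cancellativity from length-factoriality (lifting the factorizations of $\theta(b)=\theta(c)$ when $ab=ac$ and comparing lengths). In your plan cancellativity would only fall out of the isomorphism $H\cong B$, which fails; moreover the decomposition $H=\mathcal F(P_0)\times H_0$ needed for the repair itself uses cancellativity, so this step cannot be skipped. (A minor further point: in a not-yet-cancellative $H$ your assertion that $c_u\ne c_{u'}$ needs a word --- if they were equal, $u\,u_1\cdots u_k$ and $u'\,u_1\cdots u_k$ would already be two distinct equal-length factorizations of one element --- but that is easily supplied.)
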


\smallskip
All results of the present paper, as well as prior work done in \cite{C-C-G-S21}, indicate that length-factoriality is a much more exceptional property than half-factoriality and that this is true not only for domains (which is known since \cite{Co-Sm11a}) but also for commutative and cancellative monoids. The innocent Example \ref{2.2} seems to suggest that the situation is quite different for commutative semigroups that are unit-cancellative but not necessarily cancellative.

\medskip
\section{Background on Krull monoids} \label{2}

Our notation and terminology are consistent with \cite{Ge-HK06a}.  We gather some key notions. For every positive integer $n \in \N$, $C_n$ denotes a cyclic group with $n$ elements. For integers $a, b \in \Z$, $[a, b ] = \{ x \in \Z \colon a \le x \le b \}$ denotes the discrete interval between $a$ and $b$. For subsets $A, B \subset \Z$,  $A + B = \{a+b \colon a \in A, b \in B \}$ denotes their sumset and the set of distances $\Delta (A) \subset \N$ is the set of all $d \in \N$ for which there is an element $a \in A$ such that $[a,a+d] \cap A = \{a, a+d\}$. For a set $L \subset \N$, we let $\rho (L) = \sup L/ \min L \in \Q_{\ge 1} \cup \{\infty\}$ denote the elasticity of $L$, and we set $\rho ( \{0\}) = 1$.

Let $H$ be a commutative semigroup with identity. We denote by $H^{\times}$ the group of invertible elements. We say that $H$ is reduced if $H^{\times} = \{1\}$ and we denote by $H_{\red} = \{aH^{\times} \colon a \in H \}$ the associated reduced semigroup. An element $u \in H$ is said to be cancellative if $au = bu$ implies that $a=b$ for all $a,b,u \in H$. \newpage
The semigroup $H$ is called
\begin{itemize}
\item {\it cancellative} if all elements of $H$ are cancellative;

\item {\it unit-cancellative} if $a, u \in H$ and $a=au$ implies that $u \in H^{\times}$.
\end{itemize}
Thus, every cancellative monoid is unit-cancellative.

\smallskip
\centerline{\it Throughout this paper, a monoid means a commutative and unit-cancellative semigroup with identity.}
\smallskip

For a set $P$, let $\mathcal F (P)$ be the free abelian monoid with basis $P$. An element $a \in \mathcal F (P)$ is written in the form
\[
a = \prod_{p \in P} p^{\mathsf v_p (a)} \in \mathcal F (P) \,,
\]
where $\mathsf v_p \colon \mathcal F (P) \to \N_0$ denotes the $p$-adic valuation. Then $|a| = \sum_{p \in P} \mathsf v_p (a) \in \N_0$ is the length of $a$ and $\supp (a) = \{p \in P \colon \mathsf v_p (a) > 0 \} \subset P$ is the support of $a$.
Let $H$ be a multiplicatively written  monoid. An element $u\in H$ is said to be
\begin{itemize}
\item {\it prime} if $u\notin H^{\times}$ and, for all $a,b\in H$ with $u\mid ab$, $u\nmid a$ implies $u\mid b$.

\item {\it irreducible} (or an {\it atom}) if $u\notin H^{\times}$ and, for all $a,b\in H$, $u=ab$ implies that $a\in H^{\times}$ or $b\in H^{\times}$.
\end{itemize}
We denote by $\mathcal A (H)$ the set of atoms  of $H$ and, if $H$ is cancellative, then   $\mathsf q (H)$ is the quotient group of $H$. The free abelian monoid $\mathsf Z (H) = \mathcal F (\mathcal A (H_{\red}))$ is the factorization monoid of $H$ and $\pi \colon \mathsf Z (H) \to H_{\red}$, defined by $\pi (u) = u$ for all $u \in \mathcal A (H_{\red})$, is the factorization homomorphism of $H$. For an element $a \in H$,
\begin{itemize}
\item $\mathsf Z_H (a) = \mathsf Z (a) =  \pi^{-1} (aH^{\times}) \subset \mathsf Z (H)$ is the {\it set of factorizations} of $a$, and

\item $\mathsf L_H (a) = \mathsf L (a) = \{ |z| \colon z \in \mathsf Z (a) \}$ is the {\it set of lengths} of $a$.
\end{itemize}
Note that $\mathsf L (a) = \{0\}$ if and only if  $a \in H^{\times}$. Then $H$ is  atomic (resp. factorial) if $\mathsf Z (a) \ne \emptyset$ (resp. $|\mathsf Z (a)|=1$) for all $a \in H$. Examples of atomic monoids, that are not necessarily cancellative, include semigroups of ideals and semigroups of isomorphism classes of modules (see \cite[Section 3.2 and 3.3]{F-G-K-T17}, \cite[Section 4]{Ge-Re19d}, and Examples \ref{2.2} and \ref{2.7}).
If $H$ is atomic, then
\[
1 \le |\mathsf L (a) |\le |\mathsf Z (a)| \qquad \text{for all} \quad a \in H \,,
\]
We say that the monoid $H$ is
\begin{itemize}
\item {\it half-factorial} if $1 = |\mathsf L (a)|$ for all $a \in H$, and

\item {\it length-factorial} if $1 \le |\mathsf L (a) |= |\mathsf Z (a)|$ for all $a \in H$.
\end{itemize}
Thus, by definition, $H$ is factorial if and only if it is half-factorial and length-factorial. Furthermore, $H$ is factorial (half-factorial resp. length-factorial) if and only if $H_{\red}$ has the respective property.
 Then
\[
\mathcal L (H) = \{\mathsf L (a) \colon a \in H \}
\]
is the {\it system of sets of lengths} of $H$,
\[
\Delta (H) = \bigcup_{L \in \mathcal L (H)} \Delta (L) \ \subset \N
\]
is the {\it set of distances} of $H$, and
\[
\rho (H ) = \sup \{\rho (L) \colon L \in \mathcal L (H) \} \in \R_{\ge 1} \cup \{\infty\}
\]
is the {\it elasticity} of $H$. We say that $H$ has {\it accepted elasticity} if there is $L \in \mathcal L (H)$ such that $\rho (L)=\rho (H)$.
If $H$ is not half-factorial, then $\min \Delta (H) = \gcd \Delta (H)$. We start with a simple lemma.

\smallskip
\begin{lemma} \label{2.1}
Let $H$ be a  length-factorial monoid.
\begin{enumerate}
\item $\rho (H) < \infty$.

\item If $H$ is cancellative,  then  the elasticity $\rho (H)$ is accepted.

\item If $H$ is cancellative but not factorial, then $|\Delta (H)|= 1$.
\end{enumerate}
\end{lemma}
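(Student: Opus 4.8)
The plan is to extract a single combinatorial device and then read all three parts off it. If $H$ is half-factorial then $\rho(H)=1$ and $\Delta(H)=\emptyset$, and every assertion is trivial or vacuous, so I assume $H$ is not half-factorial. The device is a \emph{balanced swap}. Given two elements with factorization pairs $z,\tilde z$ and $y,\tilde y$ (so $\pi(z)=\pi(\tilde z)$ and $\pi(y)=\pi(\tilde y)$) and positive length gaps $d_a=|\tilde z|-|z|$ and $d_b=|\tilde y|-|y|$, I put $g=\gcd(d_a,d_b)$ and look at the element $\pi(z)^{d_b/g}\pi(y)^{d_a/g}$. Its two factorizations $\tilde z^{\,d_b/g}y^{\,d_a/g}$ and $z^{\,d_b/g}\tilde y^{\,d_a/g}$ have equal length, so length-factoriality forces them to coincide in $\mathsf Z(H)=\mathcal F(\mathcal A(H_{\red}))$. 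The key observation is that this coincidence is \emph{exactly} the identity $d_b(\tilde z-z)=d_a(\tilde y-y)$ in the free abelian group $\Z^{(\mathcal A(H_{\red}))}$: the two factorizations are distinct unless the identity holds, and distinct factorizations of equal length are forbidden. Hence every increment vector $\tilde z-z$ lies on one line through a primitive vector $r$, oriented so that $\delta:=|r^+|-|r^-|>0$; every length gap is then a multiple of $\delta$.

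For part (1) proportionality alone suffices, so this works for any length-factorial monoid, cancellative or not. Any two factorizations of a fixed $a$ differ by a multiple of $r$; writing the shortest as $z_{\min}$ (length $m_a=\min\mathsf L(a)$) and the longest as $z_{\min}+\nu r$, nonnegativity of $z_{\min}+\nu r$ gives $\mathsf v_u(z_{\min})\ge\nu\,\mathsf v_u(r^-)$ for each $u\in\supp(r^-)$, whence $m_a\ge\nu|r^-|$. Therefore $\rho(\mathsf L(a))=1+\nu\delta/m_a\le 1+\delta/|r^-|=|r^+|/|r^-|$, so $\rho(H)\le|r^+|/|r^-|<\infty$. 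Unit-cancellativity is used only to rule out $r^-=0$: otherwise a suitable pair gives $\pi(z)\,\pi(r^+)^{\lambda}=\pi(\tilde z)=\pi(z)$, so $\pi(r^+)$ would be a unit and an atom invertible, which is impossible.

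Parts (2) and (3) need true cancellativity, and here I would promote proportionality to a description of the kernel $K$ of the homomorphism $\Z^{(\mathcal A(H_{\red}))}\to\mathsf q(H_{\red})$ induced by $\pi$. Cancellativity embeds $H_{\red}$ into $\mathsf q(H_{\red})$, so every $w\in K$ is the increment of its positive and negative parts; proportionality then forces $K=\Z r$ together with a genuine relation $\pi(r^+)=\pi(r^-)$. With that relation in hand, whenever $x$ and $x+r$ are nonnegative they have the same $\pi$-value, so the factorizations of any $a$ are exactly $\{z_{\min}+\mu r:0\le\mu\le\nu\}$ and $\mathsf L(a)$ is a \emph{full} arithmetic progression of difference $\delta$. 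Since $H$ is not half-factorial this yields $\Delta(H)=\{\delta\}$, i.e. $|\Delta(H)|=1$ (part 3). Finally the element $w_0:=\pi(r^-)$ has both $r^-$ and $r^+$ among its factorizations, so $\rho(\mathsf L(w_0))=|r^+|/|r^-|=\rho(H)$ and the elasticity is accepted (part 2).

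The step I expect to be the crux is the balanced swap itself: the entire factorization structure collapses onto one line precisely because distinctness of the two swapped factorizations is literally equivalent to the failure of the proportionality identity, so length-factoriality delivers it with no auxiliary effort. The rest is careful bookkeeping --- fixing the orientation of $r$, verifying $r^\pm\neq0$ (the one use of unit-cancellativity), and keeping the non-cancellative bound of part (1) separate from the full arithmetic-progression picture of parts (2)--(3), where cancellativity is exactly what converts proportionality into the relation $\pi(r^+)=\pi(r^-)$ and hence into an attained elasticity.
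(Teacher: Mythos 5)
Your proof is correct, and its engine is the same one the paper uses: in each of the paper's three arguments the authors form a product of powers of two elements, calibrated so that two mixed factorizations have equal length, and then invoke length-factoriality to force equality in $\mathsf Z (H)$ --- that is exactly your balanced swap (the paper uses exponents $d_a, d_b$ where you use $d_a/g, d_b/g$; both work). What you do differently is run the swap once, universally, and record its full output: every difference of two factorizations of a common element lies on a single line in $\Z^{(\mathcal A (H_{\red}))}$. The paper instead runs three separate proofs by contradiction, each time extracting only the numerical shadow of this fact (the identity $s(t'-s')=s'(t-s)$ in parts 1 and 2, the divisibility of one disjoint block by the other in part 3). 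Your organization buys more: a direct (non-contradiction) argument, the explicit value $\rho (H)=|r^+|/|r^-|$ with an explicit witness, and the statement that in the cancellative case every $\mathsf L (a)$ is a full arithmetic progression with difference $\delta$ --- information the paper only recovers later, and only for Krull monoids, in Corollary \ref{1.2}. Your accounting of where unit-cancellativity versus cancellativity enters (ruling out $r^-=0$ in part 1; converting collinearity of the kernel into the realizability of every lattice point $z_{\min}+\mu r_0\ge 0$ as an actual factorization in parts 2 and 3) matches the paper's, which likewise needs cancellativity only from part 2 onward.

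One small repair: your claim that $K=\Z r$ for the \emph{primitive} vector $r$ on the line is not justified and can fail. The swap only shows that $K$ is a nontrivial subgroup of $\Z r$, hence $K=\Z r_0$ with $r_0=\nu_1 r$ for some $\nu_1\ge 1$; for instance, the cancellative submonoid of $\Z\oplus \Z/2\Z$ generated by $u=(3,\bar 1)$ and $v=(2,\bar 1)$ is length-factorial with sole relation $u^4=v^6$, so $K$ is generated by $(4,-6)=2\,(2,-3)$ and $\pi(r^+)\ne\pi(r^-)$ for the primitive $r=(2,-3)$. The fix is to define $r$ as a generator of $K$ (equivalently, a shortest increment) rather than the primitive vector; with that substitution every subsequent line of your argument, including the witness $w_0=\pi(r_0^-)$ for accepted elasticity and the identification $\Delta (H)=\{\delta_0\}$, goes through verbatim.
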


\begin{proof}
Without restriction we may suppose that $H$ is reduced. By definition,  $H$ is half-factorial if and only if  $\rho (H)=1$ if and only if  $\Delta (H) = \emptyset$, and if this holds, then the elasticity is accepted.  Thus we may suppose that $H$ is not half-factorial.

1. Assume to the contrary that $\rho(H)$ is infinite and choose an element $a\in H$ with $\rho( \mathsf L (a))>1$. Then there exist $r,s,t\in \N_0$ and $u_1,\ldots,u_r,v_1,\ldots,v_s,w_1,\ldots,w_{t}\in \mathcal A(H)$ with $\{v_1,\ldots,v_s\}\cap \{w_1,\ldots,w_{t}\}=\emptyset$ such that
	\begin{align*}
		a=u_1 \cdot \ldots \cdot u_rv_1 \cdot \ldots \cdot v_s=u_1 \cdot \ldots \cdot u_r w_1 \cdot \ldots \cdot w_{t}\ \text{ with }\ \rho( \mathsf L (a))=(r+t)/(r+s)>1\,.
	\end{align*}
	Since $\rho(H)$ is infinite, there exists $b\in H$ such that $\rho( \mathsf L (b))>t/s$.  Moreover, there exist $r',s',t'\in \N_0$ and $$x_1,\ldots,x_{r'},y_1,\ldots,y_{s'},z_1,\ldots,z_{t'}\in \mathcal A(H)\ \text{ with }\ \{y_1,\ldots,y_{s'}\}\cap \{z_1,\ldots,z_{t'}\}=\emptyset$$ such that
	\begin{align*}
		b=x_1 \cdot \ldots \cdot x_{r'}y_1 \cdot \ldots \cdot y_{s'}=x_1 \cdot \ldots \cdot x_{r'} z_1 \cdot \ldots \cdot z_{t'}\ \text{ with }\ \rho( \mathsf L (b))=(r'+t')/(r'+s')>t/s\,.
	\end{align*}
Since
\[
\begin{aligned}
a^{t'-s'}b^{t-s} & =(u_1 \cdot \ldots \cdot u_rv_1 \cdot \ldots \cdot v_s)^{t'-s'}(x_1 \cdot \ldots \cdot  x_{r'} z_1 \cdot \ldots \cdot z_{t'})^{t-s} \\
 & =(x_1 \cdot \ldots \cdot  x_{r'}  y_1 \cdot \ldots \cdot y_{s'})^{t-s}(u_1 \cdot \ldots \cdot u_r w_1 \cdot \ldots \cdot w_{t})^{t'-s'}
\end{aligned}
\]
and since $H$ is length-factorial, we obtain that
\[
(u_1 \cdot \ldots \cdot u_rv_1 \cdot \ldots \cdot v_s)^{t'-s'}(x_1 \cdot \ldots \cdot  x_{r'} z_1 \cdot \ldots \cdot z_{t'})^{t-s} \ \text{and} \   \ (x_1 \cdot \ldots \cdot  x_{r'}  y_1 \cdot \ldots \cdot y_{s'})^{t-s}(u_1 \cdot \ldots \cdot u_r w_1 \cdot \ldots \cdot w_{t})^{t'-s'}
\]
	are equal in the factorization monoid $\mathsf Z (H)$.
Since
\[
\{v_1,\ldots,v_s\}\cap \{w_1,\ldots,w_{t}\}=\emptyset \ \text{and} \ \{y_1,\ldots,y_{s'}\}\cap \{z_1,\ldots,z_{t'}\}=\emptyset \,,
\]
it follows that	$(v_1 \cdot \ldots \cdot v_s)^{t'-s'}$ and $( y_1 \cdot \ldots \cdot y_{s'})^{t-s}$ are equal in the factorization monoid $\mathsf Z (H)$, whence $s(t'-s')=s'(t-s)$. Therefore $t/s=t'/s'>\rho( \mathsf L (b))$, a contradiction.

\smallskip
2. This proof runs along similar lines as the proof of the first assertion. But, we need to use cancellativity now which is not needed in 1. (see Example \ref{2.2}). Assume to the contrary that $\rho(H)$ is not accepted and choose an element $a\in H$ with $\rho( \mathsf L (a))>1$. Then there exist $r,s,t\in \N_0$ and $u_1,\ldots,u_r,v_1,\ldots,v_s,w_1,\ldots,w_{t}\in \mathcal A(H)$ with $\{v_1,\ldots,v_s\}\cap \{w_1,\ldots,w_{t}\}=\emptyset$ such that
	\begin{align*}
		a=u_1 \cdot \ldots \cdot u_rv_1 \cdot \ldots \cdot v_s=u_1 \cdot \ldots \cdot u_r w_1 \cdot \ldots \cdot w_{t}\ \text{ with }\ \rho( \mathsf L (a))=(r+t)/(r+s)>1\,.
	\end{align*}
	Let $a_0=v_1 \cdot \ldots \cdot v_s$. Then $\rho( \mathsf L (a_0))=t/s>1$. Since $\rho(H)$ is not accepted, there exists $b\in H$ such that $\rho( \mathsf L(b))>\rho( \mathsf L (a_0))$.  Moreover, there exist $r',s',t'\in \N_0$ and $x_1,\ldots,x_{r'},y_1,\ldots,y_{s'},z_1,\ldots,z_{t'}\in \mathcal A(H)$ with $\{y_1,\ldots,y_{s'}\}\cap \{z_1,\ldots,z_{t'}\}=\emptyset$ such that
	\begin{align*}
		b=x_1 \cdot \ldots \cdot x_{r'}y_1 \cdot \ldots \cdot y_{s'}=x_1 \cdot \ldots \cdot x_{r'} z_1 \cdot \ldots \cdot z_{t'}\ \text{ with }\ \rho( \mathsf L (b))=(r'+t')/(r'+s')>\rho(a_0)\,.
	\end{align*}
	Let $b_0=y_1 \cdot \ldots \cdot y_{s'}$. Then $\rho( \mathsf L (b_0))=t'/s'>\rho(a_0)$.
	Since $H$ is length-factorial and
	$$a_0^{t'-s'}b_0^{t-s}=(v_1 \cdot \ldots \cdot v_s)^{t'-s'}( z_1 \cdot \ldots \cdot z_{t'})^{t-s}=( y_1 \cdot \ldots \cdot y_{s'})^{t-s}(w_1 \cdot \ldots \cdot w_{t})^{t'-s'}\,,$$
	it follows from $\{v_1,\ldots,v_s\}\cap \{w_1,\ldots,w_{t}\}=\emptyset$ and $\{y_1,\ldots,y_{s'}\}\cap \{z_1,\ldots,z_{t'}\}=\emptyset$ that
	$(v_1 \cdot \ldots \cdot v_s)^{t'-s'}$ and $( y_1 \cdot \ldots \cdot y_{s'})^{t-s}$ are equal in the factorization monoid $\mathsf Z (H)$, whence $s(t'-s')=s'(t-s)$. Therefore, we infer that $\rho( \mathsf L (a_0))=t/s=t'/s'=\rho( \mathsf L (b_0))$, a contradiction.

\smallskip
3. Assume to the contrary that $|\Delta(H)|\ge 2$. Since $\min\Delta(H)=\gcd \Delta(H)$, we may choose $d,d_0\in \Delta(H)$ with $d_0\neq d$ such that $d_0$ divides $d$.
	Let $r,s,k,t\in \N_0$ and
	$$u_1,\ldots,u_r,v_1,\ldots,v_s,w_1,\ldots,w_{s+d_0}, x_1,\ldots, x_k, y_1,\ldots,y_t,z_1,\ldots,z_{t+d}\in \mathcal A(H)$$ with $\{v_1,\ldots,v_s\}\cap \{w_1,\ldots,w_{s+d_0}\}=\emptyset$ and $\{y_1,\ldots,y_t\}\cap \{z_1,\ldots,z_{t+d}\}=\emptyset$ such that
	\begin{align*}
	a=&u_1 \cdot \ldots \cdot u_rv_1 \cdot \ldots \cdot v_s=u_1 \cdot \ldots \cdot u_r w_1 \cdot \ldots \cdot w_{s+d_0}\,,&& \text{ with } \mathsf L(a)\cap [r+s, r+s+d_0]=\{r+s, r+s+d_0\}\,,\\
	b=&x_1 \cdot \ldots \cdot  x_k  y_1 \cdot \ldots \cdot y_t=x_1 \cdot \ldots \cdot  x_k z_1 \cdot \ldots \cdot z_{t+d}\,,&& \text{ with } \mathsf L(b)\cap [k+t, k+t+d]=\{k+t, k+t+d\}\,.
	\end{align*}
	Then $$a^db^{d_0}=(u_1 \cdot \ldots \cdot u_rv_1 \cdot \ldots \cdot v_s)^d(x_1 \cdot \ldots \cdot  x_k z_1 \cdot \ldots \cdot z_{t+d})^{d_0}=(x_1 \cdot \ldots \cdot  x_k  y_1 \cdot \ldots \cdot y_t)^{d_0}(u_1 \cdot \ldots \cdot u_r w_1 \cdot \ldots \cdot w_{s+d_0})^{d}\,.$$
	Since $d(r+s)+d_0(k+t+d)=d_0(k+t)+d(r+s+d_0)$ and $H$ is length-factorial, we obtain that the two factorizations  $(v_1 \cdot \ldots \cdot v_s)^d(z_1 \cdot \ldots \cdot z_{t+d})^{d_0}$ and $(y_1 \cdot \ldots \cdot y_t)^{d_0}(w_1 \cdot \ldots \cdot w_{s+d_0})^{d}$  are equal (in the factorization monoid $\mathsf Z (H)$). Since $\{v_1,\ldots,v_s\}\cap \{w_1,\ldots,w_{s+d_0}\}=\emptyset$, we obtain $(v_1 \cdot \ldots \cdot v_s)^d$ divides $(y_1 \cdot \ldots \cdot y_t)^{d_0}$ in $\mathsf Z (H)$. Since $\{y_1,\ldots,y_t\}\cap \{z_1,\ldots,z_{t+d}\}=\emptyset$, we obtain $(y_1 \cdot \ldots \cdot y_t)^{d_0}$ divides $(v_1 \cdot \ldots \cdot v_s)^d$ in $\mathsf Z (H)$, whence $(v_1 \cdot \ldots \cdot v_s)^d=(y_1 \cdot \ldots \cdot y_t)^{d_0} \in \mathsf Z (H)$. It follows that $y_1 \cdot \ldots \cdot y_t=(v_1 \cdot \ldots \cdot v_s)^{d/d_0}$ and hence
	\[
	b=x_1 \cdot \ldots \cdot x_k(v_1 \cdot \ldots \cdot v_s)^{d/d_0}=x_1 \cdot \ldots \cdot x_k(v_1 \cdot \ldots \cdot v_s)^{d/d_0-1}w_1 \cdot \ldots \cdot w_{s+d_0}\,,
	\]
	which implies that $k+t+d_0\in \mathsf L(b)\cap [k+t, k+t+d]$, a contradiction.
\end{proof}

Our next example shows that the elasticity of a non-cancellative length-factorial monoid does not need to be accepted and that the set of distances may contain more than one element.

\smallskip
\begin{example} \label{2.2}~

1. Let $R$ be a ring and $\mathcal C$ be a small class of left $R$-modules that is closed under finite direct sums, direct summands, and isomorphisms. Then the set $\mathcal V (\mathcal C)$ of isomorphism classes of modules from $\mathcal C$ is a reduced commutative semigroup, with operation induced by the direct sum (\cite{Ba-Wi13a}). Suppose that all modules from $\mathcal C$ are directly finite (or Dedekind finite), which means that
\[
\text{If $M,N$ are modules from $\mathcal C$ such that $M \cong M \oplus N$, then $N = 0$.}
\]
This property holds true for large classes of modules (including all finitely generated modules over commutative rings; for more see \cite{Go79a, Fa19a}) and is equivalent to $\mathcal V (\mathcal C)$ being unit-cancellative. We will meet such monoids $\mathcal V (\mathcal C)$ at several places of the manuscript (e.g., in Example \ref{2.7}).

\smallskip
2. For $m \in \N$, let us consider the commutative monoid $H_m$ generated by $A_m = \{a_1,\ldots,a_m,u_1,u_2\}$ with relations generated by $R_m = \{ (a_1u_1^2, a_1u_2^3), (a_2u_1^4, a_2u_2^6),\ldots, (a_mu_1^{2m},a_mu_2^{3m})\}$, say
\[
H_m = \langle a_1,\ldots,a_m,u_1,u_2 \mid a_1 u_1^2=a_1 u_2^3, a_2u_1^4=a_2u_2^6, \ldots,a_mu_1^{2m}=a_mu_2^{3m} \rangle \,.
\]
Then $H_m$ is a reduced, commutative, atomic,  non-cancellative  monoid with $\mathcal A (H_m) =A_m$. By construction, we have  $[1,m]\subset \Delta(H_m)$, $\rho (H_m) = 3/2$, and $\rho (H)$ is not accepted. We assert that $H_m$ is length-factorial.

 We define, for any $a,b\in H_m$, that $a\sim b$ if there exists $c\in H_m$ such that $ac=bc$. This is a congruence relation on $H_m$ and the monoid $H_{m, \canc}=H_m/\sim$ is the associated  cancellative monoid  of $H_m$.
 For every $a\in H_m$, we denote by $[a]\in H_{m, \canc}$ the congruence class of $H$. Then $$H_{m, \canc}\cong \mathcal F(\{[a_i]\colon i\in[1,m]\})\times \langle [u_1],[u_2]\mid [u_1]^2=[u_2]^3\rangle\,,$$
whence it is easy to see that  $H_{m, \canc}$ is length-factorial.
 Let $x_1,x_2$ be two atoms of $H_m$. By our construction of $H$, we have $[x_1]=[x_2]$ if and only if $x_1=x_2$. Therefore the length-factoriality of $H_{m, \canc}$ implies that $H_m$ is length-factorial.
By a result of Bergman-Dicks (\cite[Theorems 6.2 and 6.4]{Be74a} and \cite[page 315]{Be-Di78}), the  monoid $H_m$ can be realized as a monoid of isomorphism classes of modules, as introduced in 1.
\end{example}

Next we discuss Krull monoids. A monoid homomorphism $\varphi \colon H \to D$ is called a
\begin{itemize}
\item {\it divisor homomorphism} if $a, b \in H$ and $\varphi (a) \mid \varphi (b)$ (in $D$) imply that $a \mid b$ (in $H$);

\item {\it divisor theory} (for $H$) if $\varphi $ is a divisor homomorphism, $D$ is free abelian, and for every $a \in D$ there are $a_1, \ldots, a_m \in H$ such that $a = \gcd \big( \varphi (a_1), \ldots, \varphi (a_m) \big)$.
\end{itemize}
A monoid $H$ is a {\it Krull monoid} if it is cancellative and satisfies one of the following equivalent conditions (\cite[Theorem 2.4.8]{Ge-HK06a} ):
\begin{itemize}
\item[(a)] $H$ is completely integrally closed and satisfies the ACC on divisorial ideals.

\item[(b)] $H$ has a divisor homomorphism to a free abelian monoid.

\item[(c)] $H$ has a divisor theory.
\end{itemize}
Property (a) can be used to show that a domain is a Krull domain if and only if its multiplicative monoid of nonzero elements is a Krull monoid. Examples of Krull monoids are given in \cite{Ge-HK06a} and in the recent survey \cite{Ge-Zh20a}). In particular, let $\mathcal V (\mathcal C)$ be a monoid of isomorphism classes of modules, as introduced in Example \ref{2.2}.1. If $\End_R (M)$ is semilocal for all $M$ from $\mathcal C$, then $\mathcal V (\mathcal C)$ is a reduced Krull monoid (\cite[Theorem 3.4]{Fa02}), and
every reduced Krull monoid  can be realized as  a monoid of isomorphism classes of modules (\cite[Theorem 2.1]{Fa-Wi04}).

To discuss class groups of Krull monoids, let $H$ be a Krull monoid. Then there is a divisor theory $H_{\red} \hookrightarrow F = \mathcal F (P)$ and
\begin{equation} \label{classgroup}
\mathcal C (H) = \mathcal C (H_{\red}) = \mathsf q (F)/ \mathsf q (H_{\red})
\end{equation}
is the (divisor) class group of $H$. The divisor class group is isomorphic to the (ideal theoretic) $v$-class group of $H$, and if $R$ is a Krull domain, then the class group of the Krull monoid $R \setminus \{0\}$ coincides with the usual divisor class group of the domain $R$. If the monoid $H$  in Theorem \ref{1.1} is length-factorial, then $H^*$ is a reduced finitely generated Krull monoid. There are various characterizations of finitely generated Krull monoids (\cite[Theorem 2.7.14]{Ge-HK06a}). In particular, every such monoid is a Diophantine monoid (the monoid of non-negative solutions of a system of linear Diophantine equations; \cite{Ch-Kr-Oe02}). For every $a \in \mathsf q (F)$, we denote by $[a] = a \mathsf q (H_{\red}) \subset \mathsf q (F)$ the class containing $a$. For $g \in \mathcal C (H)$, $P \cap g$ is the set of prime divisors lying in $g$. Concerning the distribution of prime divisors in Krull monoids of isomorphism classes of modules we refer to \cite{F-H-K-W06, Fa06a, H-K-K-W06, Fa12a, Ba-Ge14b}.

Let $G$ be an additive abelian group and $G_0 \subset G$ be a subset. We denote by $\langle G_0 \rangle \subset G$ the subgroup generated by $G_0$ and by $[G_0 ] \subset G$ the submonoid generated by $G_0$. A tuple $(e_1, \ldots, e_r) \in G^r$, with $r \in \N$ (respectively, the elements $e_1, \ldots, e_r \in G$) are called {\it independent} if $e_i \ne 0$ for all $i \in [1,r]$ and $\langle e_1, \ldots, e_r \rangle = \langle e_1 \rangle \oplus \ldots \oplus \langle e_r \rangle$, and it is called a {\it basis} of $G$ if $e_i \ne 0$ for all $i \in [1,r]$ and $G = \langle e_1 \rangle \oplus \ldots \oplus \langle e_r \rangle$.

We discuss a class of Krull monoids needed in the sequel, namely monoids of zero-sum sequences.
For an element
\[
S = g_1 \cdot \ldots \cdot g_{\ell} = \prod_{g \in G_0} g^{\mathsf v_g (S)} \in \mathcal F (G_0) \,,
\]
where $g_1, \ldots, g_{\ell} \in G_0$, $|S|=\ell = \sum_{g \in G_0} \mathsf v_g (S) \in \N_0$ is the length of $S$, and
\[
\sigma (S) = g_1 + \ldots + g_{\ell} \in G \quad \text{is the {\it sum} of $S$}\,.
\]
 We say that $S$ is {\it zero-sum free} if $\sum_{i \in I} g_i \ne 0$ for all $\emptyset \ne I \subset [1, \ell]$. The {\it monoid of zero-sum sequences}
\[
\mathcal B (G_0) = \{ S \in \mathcal F (G_0) \colon \sigma (S) = 0 \} \subset \mathcal F (G_0)
\]
over $G_0$ is a Krull monoid, by Property (b),  since the inclusion $\mathcal B (G_0) \hookrightarrow \mathcal F (G_0)$ is a divisor homomorphism. We denote by $\mathcal A (G_0):= \mathcal A \big( \mathcal B (G_0) \big)$ the set of atoms (minimal zero-sum sequences) of $\mathcal B (G_0)$. The subset $G_0$ is called half-factorial (non-half-factorial resp. minimal non-half-factorial) if the monoid $\mathcal B (G_0)$ is half-factorial (not half-factorial resp. $G_0$ is not half-factorial but every proper subset is half-factorial).  Half-factorial and (minimal) non-half-factorial subsets play a central role when studying the arithmetic of Krull monoids (we refer to \cite[Chapter 6]{Ge-HK06a} for the basics and to \cite{Zh18a,Pl-Sc20a}). Note that minimal non-half-factorial subsets are finite.

The arithmetic of Krull monoids is studied via transfer homomorphisms to monoids of zero-sum sequences. We recall the required concepts. A monoid homomorphism  $\theta \colon H \to B$ is called a {\it transfer homomorphism} if it has the following properties:

\begin{enumerate}
\item[{\bf (T\,1)\,}] $B = \theta(H) B^\times$ \ and \ $\theta ^{-1} (B^\times) = H^\times$.

\item[{\bf (T\,2)\,}] If $u \in H$, \ $b,\,c \in B$ \ and \ $\theta (u) = bc$, then there exist \ $v,\,w \in H$ \ such that \ $u = vw$, \ $\theta (v) \in bB^{\times}$, and  $\theta (w) \in c B^{\times}$.
\end{enumerate}

\smallskip
\begin{lemma} \label{2.3}
Let $\theta \colon H \to B$ be a transfer homomorphism of atomic monoids.
\begin{enumerate}
\item For every $a \in H$, we have $\mathsf L_H (a) = \mathsf L_B \big( \theta (a) \big)$.

\item Let $p \in H$. Then $p$ is an atom  in $H$ if and only if $\theta (p)$ is an atom  in $B$. Moreover, if $p$ is a prime in $H$, then $\theta (p)$ is a prime in $B$.

\item $\mathcal L (H) = \mathcal L (B)$, whence $H$ is half-factorial if and only if $B$ is half-factorial.

\item If $H$ is length-factorial, then $B$ is length-factorial.
\end{enumerate}
\end{lemma}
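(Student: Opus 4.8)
The four statements flow from a single construction, and the plan is to build that first: the transfer homomorphism $\theta$ lifts to a length-preserving homomorphism $\overline{\theta}\colon \mathsf Z(H)\to \mathsf Z(B)$ of factorization monoids. To make $\overline{\theta}$ well defined I would first dispose of the atom half of part 2. If $p\in\mathcal A(H)$ and $\theta(p)=bc$ in $B$, then (T\,2) yields $p=vw$ with $\theta(v)\in bB^{\times}$ and $\theta(w)\in cB^{\times}$; as $p$ is an atom one of $v,w$ lies in $H^{\times}$, and by (T\,1) the corresponding factor $b$ or $c$ lies in $B^{\times}$, so $\theta(p)$ is an atom. Conversely, if $\theta(p)$ is an atom then $p\notin H^{\times}$ by (T\,1), and any decomposition $p=vw$ maps to $\theta(p)=\theta(v)\theta(w)$, forcing $v$ or $w$ into $H^{\times}$. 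Hence, passing to $H_{\red}$ and $B_{\red}$, setting $\overline{\theta}(u)=\theta(u)$ for $u\in\mathcal A(H_{\red})$ and extending multiplicatively gives a homomorphism with $|\overline{\theta}(z)|=|z|$ for every $z\in\mathsf Z(H)$.

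The crucial property, obtained by iterating (T\,2), is that $\overline{\theta}$ maps the fibre $\mathsf Z_H(a)$ \emph{onto} $\mathsf Z_B(\theta(a))$ for each $a\in H$: given $\theta(a)=c_1\cdots c_s$ in $B$, repeated application of (T\,2) produces atoms $v_1,\dots,v_s\in\mathcal A(H_{\red})$ with $a=v_1\cdots v_s$ and $\theta(v_i)\in c_iB^{\times}$. Since $\overline{\theta}$ is length preserving and surjective onto $\mathsf Z_B(\theta(a))$, applying the length map $|\cdot|$ at once gives $\mathsf L_H(a)=\mathsf L_B(\theta(a))$, which is part 1. For part 3, (T\,1) says every $b\in B$ is associated to some $\theta(a)$, so $\mathsf L_B(b)=\mathsf L_B(\theta(a))=\mathsf L_H(a)$; thus $\mathcal L(B)=\mathcal L(H)$, and half-factoriality of either monoid is equivalent to all these sets being singletons, hence to half-factoriality of the other.

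For the prime half of part 2 I would argue by contradiction, and this is the step I expect to be the most delicate, since $\theta$ need not be injective and a prime cannot in general be recovered from its image. Suppose $p$ is prime in $H$ but $\theta(p)$ is not prime in $B$; as $\theta(p)$ is already an atom, there are $b_1,b_2\in B$ with $\theta(p)\mid b_1b_2$ while $\theta(p)\nmid b_1$ and $\theta(p)\nmid b_2$. Writing $b_1b_2=\theta(p)c$ and, by (T\,1), $c=\theta(a_3)\gamma$ with $\gamma\in B^{\times}$, we get $\theta(pa_3)=b_1(b_2\gamma^{-1})$. Now (T\,2) applied to $pa_3$ yields $pa_3=vw$ with $\theta(v)\in b_1B^{\times}$ and $\theta(w)\in b_2B^{\times}$. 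Since $p$ is prime and $p\mid pa_3=vw$, it divides $v$ or $w$, say $v=pv'$; then $\theta(p)\mid\theta(v)$, whence $\theta(p)\mid b_1$, a contradiction (and symmetrically for $w$). Therefore $\theta(p)$ is prime.

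Finally, part 4 is the genuinely new assertion, and it falls out of the fibrewise surjectivity established above. Assume $H$ is length-factorial and suppose, for contradiction, that $B$ is not; then some $b\in B$ has two distinct factorizations $z_1',z_2'\in\mathsf Z_B(b)$ with $|z_1'|=|z_2'|$. By (T\,1) we may replace $b$ by an associated $\theta(a)$ without changing $\mathsf Z_B(b)$, and surjectivity of $\overline{\theta}$ on $\mathsf Z_H(a)$ lets us choose $z_1,z_2\in\mathsf Z_H(a)$ with $\overline{\theta}(z_i)=z_i'$. As $z_1'\ne z_2'$ we have $z_1\ne z_2$, while $|z_1|=|z_1'|=|z_2'|=|z_2|$; thus $a$ admits two distinct factorizations of equal length, contradicting length-factoriality of $H$. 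Hence $B$ is length-factorial. The only real work, then, is isolating the fibrewise surjectivity of $\overline{\theta}$ and the contradiction argument for primality; everything else is bookkeeping with (T\,1) and (T\,2).
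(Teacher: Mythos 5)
Your proposal is correct and follows essentially the same route as the paper: both rest on lifting factorizations of $\theta(a)$ to factorizations of $a$ of the same length via iterated use of {\bf (T\,2)}, which yields parts 1, 3 and 4 at once, and both prove the prime statement by pulling a divisibility relation $\theta(p)\mid \alpha\beta$ back to a relation $pc=ab$ in $H$ and using primality of $p$ there. The only cosmetic differences are that you establish the atom characterization directly from {\bf (T\,1)}/{\bf (T\,2)} rather than reading it off from $\mathsf L_H(p)=\{1\}$ as the paper does, and you phrase the prime and length-factoriality arguments as contradictions where the paper argues directly.
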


\begin{proof}
Without restriction we may suppose that $H$ and $B$ are reduced. Then {\bf (T1)} implies that $\theta$ is surjective.

1. This easily follows from {\bf (T\,2)\,} (for details in the cancellative setting we refer to \cite[Chapter 3.2]{Ge-HK06a}).

2. Let $p \in H$. Since $p$ is an atom in $H$ if and only if $\mathsf L_H (p) = \{1\}$ and similarly for $\theta (p)$ and $B$, 1. implies that $p$ is an atom in $H$ if and only if $\theta (p)$ is an atom in $B$.

Now suppose that $p$ is a prime in $H$ and let $\alpha, \beta \in B$ such that $\theta (p) \mid \alpha \beta$. Then there is $c \in H$ such that $\alpha \beta = \theta (pc)$. Then {\bf (T2)} implies that there are $a, b \in H$ such that $pc = ab$, $\theta (a) = \alpha$, and $\theta (b) = \beta$. Without restriction we may suppose that $p \mid a$, say $a = pa'$ for some $a' \in H$, whence $\alpha = \theta (a) = \theta (p) \theta (a')$. Thus $\theta (p)$ is a prime in $B$.

3. This follows immediately from 1.

4. Suppose that $H$ is length-factorial and choose some $\alpha \in B$. Let $a \in H$ such that $\theta (a) = \alpha$, and let $k \in \mathsf L_B ( \alpha ) = \mathsf L_H (a)$. By {\bf (T2)}, every factorization of $\alpha$ of length $k$ can be lifted to a factorization of $a$ of length $k$. Thus, if there is only one factorization of $a$ of length $k$, there is only one factorization of $\alpha$ of length $k$. This implies that $B$ is length-factorial.
\end{proof}

Let all notation be as in Lemma \ref{2.3}. There are examples (even for cancellative monoids) where $\theta (p)$ is a prime in $B$ but $p$ fails to be prime in $H$. Furthermore, $B$ may be length-factorial, but $H$ is not length-factorial.

\smallskip
The study of factorial versus length-factorial monoids can be seen as part of a larger program. We briefly outline this and introduce (as suggested by the reviewer) the concept of length-FF-monoids.
A monoid $H$ is said to be
\begin{itemize}
\item an {\it \FF-monoid} (finite factorization monoid) if $\mathsf Z (a)$ is finite nonempty for all $a \in H$.

\item a {\it \BF-monoid} (bounded factorization monoid) if $\mathsf L (a)$ is finite nonempty for all $a \in H$.

\item a {\it length-\FF-monoid} if it is atomic and every element has only finitely many factorizations of the same length.
\end{itemize}
A commutative ring $R$ has one of these properties if the respective property holds true for its monoid of regular elements. By definition,  a monoid is  an FF-monoid if and only if it is a BF-monoid and a length-FF-monoid.

Every Krull monoid is an \FF-monoid.
Let $H$ be a cancellative monoid. If $H$ satisfies the ACC on divisorial ideals, then $H$ is a \BF-monoid and every \BF-monoid satisfies the ACC on principal ideals. Suppose that $H$ satisfies the ACC on divisorial ideals and $(H \DP \widehat H) \ne \emptyset$. Then $\widehat H$ is a Krull monoid, and $H$ is an \FF-monoid if and only if the factor group $\widehat H^{\times}/H^{\times}$ is finite (\cite[Theorem 1.5.6]{Ge-HK06a}). In particular, a noetherian domain $R$, whose integral closure $\overline R$ is a finitely generated $R$-module, is an \FF-domain if and only if $\overline R^{\times}/R^{\times}$ is finite.
The ring of integer-valued polynomials $\Int (\Z)$ is an \FF-domain and hence a \BF-domain but it does not satisfy the ACC on divisorial ideals. We continue with two examples.

\smallskip
\begin{example}~ \label{2.4}

1. Let $H \subset (\Q_{\ge 0}, +)$ be the additive submonoid of the non-negative rationals that is generated by $\{ 1/p \colon p \ \text{is prime} \}$. Then $H$ satisfies the ACC on principal ideals by \cite[Theorem 4.5]{Ch-Go-Go21}. Since  $\mathcal A (H) = \{ {1/p} \colon p \in \mathbb P\}$ and $1 = {1/p} + \ldots + {1/p}$, it follows that $\mathbb P \subset \mathsf L_H (1)$, whence $H$ is not a \BF-monoid. We assert that $H$ is a length-FF-monoid. In order to show this, let $r=\frac{n}{m}\in H$ and let $k\in \mathsf L_H (r)$, where $m,n\in \N$ such that $\gcd(m,n)=1$. It suffices to show that there are only finitely many primes $p$ such that $1/p$ can appear in a factorization of $r$ of length $k$.
Suppose
\[
r=\frac{n}{m}=\sum_{i=1}^{t}\frac{q_i}{p_i}, \text{ where $p_1,\ldots,p_t$ are pairwise distinct primes, $q_1, \ldots, q_t \in \N$  and }\sum_{i=1}^tq_i=k\,.
\]
If $i \in [1, t]$ and $p_i$ does not divide $m$, then $p_i$ must divide $q_i$, whence $q_i \in [p_i, k]$. Together with the fact that $m$ has only finitely many prime divisors, the assertion follows.

\smallskip
2. Let $H$ be as in 1. and consider the monoid algebra
\[
\Q [H] = \Big\{ \sum_{p \in \mathbb P} r_p X^{1/p} \colon \quad  r_p \in \Q \quad \text{and} \quad r_p=0 \quad \text{for almost all} \ p \in \mathbb P \Big\} \,.
\]
Since  $\Q [H]$ satisfies the ACC on principal ideals (\cite[Proposition 4.2]{Go21a}), it is atomic. Since $X^{1/p}$ is an atom for all $p \in \mathbb P$ and $X = X^{1/p} \cdot \ldots \cdot X^{1/p}$, it follows that $\mathbb P \subset \mathsf L_{\Q[H]} (X)$, whence $\Q [H]$ is not a \BF-monoid.
We assert that $\Q[H]$ is not a length-FF-monoid. In order to prove this, we introduce some further notation.
For an element  $f=r_0X^{\alpha_0}+r_1X^{\alpha_1}+\ldots+r_kX^{\alpha_k}\in \Q[H]$	, where $k \in \N_0$, $r_0,\ldots, r_k\in \Q$, and $\alpha_0,\ldots,\alpha_k\in H$ with $0=\alpha_0<\alpha_1<\ldots<\alpha_k$,  we define  $\mathsf n(f)=r_0$, and $\mathsf m(f)=\alpha_k$. Thus $\mathsf m(f)=0$ if and only if $f\in \Q$.

For an odd prime $p$, we  consider the factorization
\[
1-X=1-(X^{1/p})^p=(1-X^{1/p})(1+X^{1/p}+\ldots+X^{(p-1)/p}) \,,
\]
and we assert that
 both, $1-X^{1/p}$ and $1+X^{1/p}+\ldots+X^{(p-1)/p}$, are atoms of $\Q[H]$.
If this holds, then $1-X$ has infinitely many factorizations of length two, whence $\Q[H]$ is not a length-FF-monoid.

Suppose that $1-X^{1/p}=gh$ for some $g,h\in \Q[H]$.   Then $1/p=\mathsf m(g)+\mathsf m(h)$ and, since $1/p$ is an atom of $H$, it follows that $\mathsf m(g)=0$ or $\mathsf m(h)=0$. Thus,  $g\in \Q$ or $h\in \Q$ and $1-X^{1/p}$	 is an atom.

Suppose that  $1+X^{1/p}+\ldots+X^{p-1/p}=gh$ for some $g,h\in \Q[H]$.
Without loss of generality, we may assume that $\mathsf n(g)=\mathsf n(h)=1$ and we set
\[
g=1+u_1X^{\alpha_1}+\ldots+u_kX^{\alpha_k} \quad \text{ and }\quad h=1+v_1X^{\beta_1}+\ldots+v_{\ell}X^{\beta_{\ell}}\,,
\]
where $k,\ell \in \N_0$, $u_0=v_0=1$, $u_1,\ldots, u_k,v_1,\ldots,v_{\ell}\in \Q\setminus\{0\}$, and $\alpha_0,\ldots, \alpha_k, \beta_0,\ldots,\beta_{\ell}\in H$ with $0= \alpha_0 <\alpha_1<\ldots< \alpha_k$ and $0 = \beta_0 <\beta_1<\ldots<\beta_{\ell}$. We have to show that either $g=1$ or $h=1$.

 The coefficient of $X^{\alpha_k+\beta_{\ell}}$ in $gh$ is $u_kv_{\ell}\neq 0$, whence $\alpha_k+\beta_{\ell}=(p-1)/p$. Since  for every $i\in [1,p-1]$, the element $i/p \in H$ has unique factorization in $H$, it follows that $\alpha_k=a/p$ and $\beta_{\ell}=b/p$ for some $a,b\in [0,p-1]$ with $a+b=p-1$.
Let \begin{align*}
	I&= \big\{i\in [1,k]\colon \alpha_i\not\in \{r/p\colon r\in [1, a-1]\} \big\}\,, \quad g_2=\sum_{i\in I}u_iX^{\alpha_i}\,, \quad  g_1=g-g_2\,,\\
	\text{and }	J&= \big\{j\in [1,\ell]\colon \beta_j\not\in \{r/p\colon r\in [1, b-1]\} \big\}\,, \quad h_2=\sum_{j\in J}v_jX_{\beta_j}\,, \quad 	h_1=h-h_2\,,
\end{align*}
with the convention that $g_2=0$ if $I = \emptyset$ and $h_2=0$ if $J = \emptyset$.
Then $gh-g_1h_1=g_1h_2+h_1g_2+g_2h_2\in \Q[X^{1/p}]$.
As above we use that $i/p$ has unique factorization in $H$ for every $i \in [1,p-1]$ and infer that  the coefficient of $X^{i/p}$ in $g_1h_2+h_1g_2+g_2h_2$ equals zero. Therefore, we obtain that $g_1h_2+h_1g_2+g_2h_2=0$, whence $gh=g_1h_1$. Since $g_1, h_1 \in \Q[X^{1/p}]$, $\mathsf m (g) = \mathsf m (g_1)$, and $\mathsf m (h) = \mathsf m (h_1)$, we set
\[
g_1=1+u_1'X^{1/p}+\ldots+u_{a}'X^{a/p}, \quad h_1=1+v_1'X^{1/p}+\ldots+v_{b}'X^{b/p}\,,
\]
where  $u_0'=v_0'=1$, $u_1',\ldots, u_{a}', v_1',\ldots,v_{b}'\in \Q$ with $u_{a}'\neq 0$, and $v_{b}'\neq 0$.
Setting $Y=X^{1/p}$ we obtain
\[
1+Y+\ldots+Y^{p-1}=(1+u_1'Y+\ldots+u_{a}'Y^{a})(1+v_1'Y+\ldots+v_{b}'Y^{b})\in \Q[Y]\,.
\]
Since $1+Y+\ldots+Y^{p-1}$ is irreducible in $\Q[Y]$,
we obtain,  after renumbering if necessary, that $b=0$, $\mathsf m (h) = 0$, and $h=1$.
\end{example}

\smallskip
The examples show that non-BF-monoids may or may not be length-FF-monoids. Thus, the length-FF-property could be a tool leading to a better understanding of the non-BF-property. Indeed, although the concepts of \BF- and \FF-monoids and domains were introduced  more than thirty years ago (\cite{An-An-Za90}), the arithmetic of non-BF-monoids has not been studied yet  in a systematic way (a main obstacle is that they miss the ACC on divisorial ideals).

Furthermore, the examples show that the length-FF-property does not always imply the BF-property, whence it need not imply the FF-property. This is in analogy to the fact that length-factoriality does not always imply factoriality. But, since the latter implication does hold true for large classes of monoids including all domains, it is a  natural question in the same vein  to ask in which classes of monoids or domains the length-FF-property implies the FF-property.

\medskip
\section{Proof of Theorem \ref{1.1} and of its corollaries} \label{3}

In this section we prove our main results. To do so, we start with three lemmas on Krull monoids.

\smallskip
\begin{lemma} \label{3.1}
Let $H$ be a reduced Krull monoid with divisor theory $H \hookrightarrow F = \mathcal F (P)$, class group $G = \mathcal C (H)$, and let $G_P = \{ [p] \colon p \in P\} \subset G$ be the set of classes containing prime divisors.
\begin{enumerate}
\item The map $\boldsymbol \beta \colon H \to \mathcal B (G_P)$, defined by $a = p_1 \cdot \ldots \cdot p_{\ell} \mapsto [p_1] \cdot \ldots \cdot [p_{\ell}]$ where $\ell \in \N$ and $p_1, \ldots, p_{\ell} \in P$, is a transfer homomorphism.

\item The map $\boldsymbol \beta$ is an isomorphism if and only if every class $g \in G_P$ contains precisely one prime divisor.

\item We have $G = [G_P]$ and $G = [G_P \setminus \{g\}]$ for all classes $g \in G_P$ that contain precisely one prime divisor.
\end{enumerate}
\end{lemma}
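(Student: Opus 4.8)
The plan is to treat the three assertions in order, assuming throughout that $H$ is reduced, so that $H \hookrightarrow F = \mathcal F(P)$ is a divisor theory and in particular $H$ is saturated in $F$, i.e. $H = \mathsf q(H) \cap F$. For (1) I would first observe that $\boldsymbol\beta$ is the composite of the inclusion $H \hookrightarrow \mathcal F(P)$ with the monoid homomorphism $\mathcal F(P) \to \mathcal F(G_P)$ induced by $p \mapsto [p]$, and that $\sigma(\boldsymbol\beta(a)) = [a] = 0$ for every $a \in H$, so $\boldsymbol\beta$ indeed maps into $\mathcal B(G_P)$. Property (T1) reduces to surjectivity (both monoids being reduced): given $S = g_1 \cdot \ldots \cdot g_\ell \in \mathcal B(G_P)$, choose $p_i \in P$ with $[p_i] = g_i$ and set $a = p_1 \cdot \ldots \cdot p_\ell$; then $[a] = \sigma(S) = 0$, so saturatedness forces $a \in H$ with $\boldsymbol\beta(a) = S$. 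For (T2), if $\boldsymbol\beta(u) = bc$, I write $u = p_1 \cdot \ldots \cdot p_\ell$, use freeness of $\mathcal F(G_P)$ to split the index set as $I \sqcup J$ with $\prod_{i\in I}[p_i] = b$ and $\prod_{j\in J}[p_j] = c$, and put $v = \prod_{i\in I}p_i$, $w = \prod_{j\in J}p_j$; then $u = vw$ and $[v] = \sigma(b) = 0 = \sigma(c) = [w]$, so $v,w \in H$ by saturatedness, as required.

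For (2), if every class of $G_P$ contains exactly one prime, then $p \mapsto [p]$ is a bijection $P \to G_P$, so $\boldsymbol\beta$ is injective and, being surjective by (1), an isomorphism. Conversely, if some $g \in G_P$ contains two distinct primes $p \neq q$, then using $G = [G_P]$ from (3) I write $-g = \sigma(T)$ with $T \in \mathcal F(G_P)$, so that $W := g\,T \in \mathcal B(G_P)$ satisfies $\mathsf v_g(W) \geq 1$. Lifting $W$ through $\boldsymbol\beta$ once with $p$ and once with $q$ in the role of one copy of $g$ yields distinct $a, a' \in H$ with $\boldsymbol\beta(a) = \boldsymbol\beta(a') = W$; hence $\boldsymbol\beta$ is not injective and not an isomorphism.

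For (3), the equality $G = [G_P]$ follows from cofinality of the divisor theory: every $a \in F$ divides some $b \in H$ (as $a$ is the $\gcd$ of finitely many elements of $H$, it divides each of them), and writing $b = aw$ gives $-[a] = [w] \in [G_P]$; since every $x \in G$ has the form $[u] - [v] = [u] + [w]$ with $u,v,w \in F$, we get $x \in [G_P]$. Now fix a class $g$ with unique prime $p$ and set $M := [G_P\setminus\{g\}]$. Since $p$ is the only prime of class $g$, the classes of the remaining primes are exactly $G_P\setminus\{g\}$, so $G = [G_P] = M + \N_0\,g$, and thus $M = G$ is equivalent to $g \in M$. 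The divisor-theory condition at $p$ produces $a \in H$ with $\mathsf v_p(a) = 1$; then $\boldsymbol\beta(a) = g\,T$ with $T \in \mathcal F(G_P\setminus\{g\})$, whence $-g = \sigma(T) \in M$. Consequently $g \in M$ as soon as some positive multiple $kg$ (with $k \geq 1$) lies in $M$, because then $g = kg + (k-1)(-g)$; in particular this settles the case where $g$ has finite order $n$, since $g = (n-1)(-g) \in M$.

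The main obstacle is the remaining case: $g$ of infinite order with no positive multiple in $M$. Here I would separate $g$ from $M$ by an additive functional $\psi \colon G \to \R$ with $\psi(h) \geq 0$ for all $h \in G_P\setminus\{g\}$ and $\psi(g) < 0$, and then contradict the divisor-theory condition. Indeed, since every element of $H$ has class $0$, evaluating $\psi$ on the element $a$ above gives $\sum_{q \neq p}\mathsf v_q(a)\,\psi([q]) = -\mathsf v_p(a)\,\psi(g) > 0$, forcing a prime $q^* \neq p$ with $\psi([q^*]) > 0$; but the $\gcd$-condition at $q^*$ supplies a $p$-free $b \in H$ with $\mathsf v_{q^*}(b) \geq 1$, and then $0 = \psi([b]) = \sum_{q \neq p}\mathsf v_q(b)\,\psi([q]) \geq \mathsf v_{q^*}(b)\,\psi([q^*]) > 0$, which is absurd. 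The delicate point, and the one I expect to require the most care, is the existence of this separating functional for an arbitrary, possibly infinitely generated class group: one passes to $G \otimes \R$ and argues, via rationality and closedness of the cone, that the absence of any positive multiple of $g$ in $M$ places $g$ outside the closed cone generated by $G_P\setminus\{g\}$, so that such a $\psi$ exists.
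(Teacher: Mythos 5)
Your arguments for (1), (2), the equality $G=[G_P]$, and the finite-order case of (3) are correct and self-contained (the paper itself simply cites \cite[Theorems 3.4.10 and 2.5.4]{Ge-HK06a} for these facts, so a direct proof is welcome). The genuine gap is in the last case of (3): for $g$ of infinite order with no positive multiple in $M=[G_P\setminus\{g\}]$ you invoke a separating functional $\psi\colon G\to\R$ with $\psi\ge 0$ on $G_P\setminus\{g\}$ and $\psi(g)<0$, but the sketched existence proof does not go through in the stated generality. The class group of a Krull monoid is an arbitrary abelian group and $G_P$ may be infinite, so the cone generated by $(G_P\setminus\{g\})\otimes 1$ in $G\otimes\R$ is in general neither finitely generated nor closed; an element outside such a cone may still lie in its closure, in which case no separating linear functional exists, and the Farkas-type rationality arguments you allude to are only available for finitely generated cones. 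Since this step carries the entire burden of the infinite-order case, the proof is incomplete as it stands.

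The irony is that you already hold the tool needed to close the gap, namely the observation you make at $q^*$: for \emph{every} prime $q\in P\setminus\{p\}$, the $\gcd$-condition at $q$ yields some $b\in H$ with $q\mid b$ and $p\nmid b$ (since $\mathsf v_p(q)=0$ forces $\min\mathsf v_p=0$ over the witnesses of $q=\gcd(b_1,\ldots,b_m)$), and writing $b=qc$ gives $-[q]=[c]\in[G_P\setminus\{g\}]=M$, because every prime divisor of $c$ lies in a class of $G_P\setminus\{g\}$ (here the uniqueness of $p$ in the class $g$ is used). Hence $M$ contains $-(G_P\setminus\{g\})$ and is therefore a subgroup of $G$. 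Your earlier computation $-g=\sigma(T)\in M$ then immediately yields $g\in M$, whence $G=M+\N_0\,g=M$, with no case distinction on the order of $g$ and no separation argument at all.
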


\begin{proof}
1. This follows from  \cite[Theorem 3.4.10]{Ge-HK06a}.

2. Since $\mathcal B (G_P)$ is reduced, {\bf (T\,1)\,} implies that $\boldsymbol \beta$ is surjective. Thus, $\boldsymbol \beta$ is an isomorphism if and only if every class $g \in G_P$ contains precisely one prime divisor.

3. This follows from \cite[Theorem 2.5.4]{Ge-HK06a}.
\end{proof}

\smallskip
\begin{lemma}\label{3.2}
Let $G$ be an abelian group and let $G_0\subset G\setminus\{0\}$ be a subset such that $G = [G_0\setminus\{g\}]$ for all $g \in G_0$. Suppose there is $B\in \mathcal B(G_0)$ having two distinct factorizations
	$$B=U_1\cdot \ldots \cdot U_k=V_1\cdot \ldots \cdot V_{\ell}\,,$$
	where $k,\ell\ge 2$ and
 $U_1,\ldots, U_k, V_1,\ldots, V_{\ell}\in \mathcal A(G_0)$.
\begin{enumerate}
\item For any distinct $g,h\in G_0$, there exists two atoms $A_1, A_2\in \mathcal A(G_0)$ such that $\mathsf v_g(A_1)=1$ and $h\in \supp(A_2)\subset G_0\setminus\{g\}$.
 	
\item If $\mathcal B(G_0)$ is  length-factorial, then $\mathcal A(G_0)=\{U_1,\ldots, U_k, V_1,\ldots, V_{\ell}\}$.
\end{enumerate}
\end{lemma}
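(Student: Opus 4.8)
The plan is to first extract the numerical consequence of length-factoriality and then rule out any atom outside the given list by manufacturing, for each such atom, a pair of distinct factorizations of equal length. Since $B=U_1\cdots U_k=V_1\cdots V_{\ell}$ are two \emph{distinct} factorizations and $\mathcal B(G_0)$ is length-factorial, their lengths must differ, so $k\neq\ell$; assume $k<\ell$, so that swapping the block $U_1\cdots U_k$ for $V_1\cdots V_{\ell}$ changes the factorization length by the fixed amount $\ell-k>0$. The inclusions $U_i,V_j\in\mathcal A(G_0)$ are clear, so the content of the assertion is that there is no further atom. Fix an atom $W$ with $W\notin\{U_1,\ldots,U_k,V_1,\ldots,V_{\ell}\}$; I aim for a contradiction.

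The key step is to produce an element $C$ admitting two factorizations whose $W$-multiplicities differ. Choose $g\in\supp(W)$, put $m=\mathsf v_g(W)\ge 1$ and write $W=g^mW'$ with $g\nmid W'$. Exploiting the hypothesis $G=[G_0\setminus\{g\}]$, I pick $S,T\in\mathcal F(G_0\setminus\{g\})$ with $\sigma(S)=g$ and $\sigma(T)=-g$; then $ST$, $gT$ and $S^mW'$ are zero-sum sequences, the first and last avoiding $g$ and the middle one meeting $g$ only in the displayed factor. Setting $C=g^mW'S^mT^m\in\mathcal B(G_0)$ one checks $\sigma(C)=0$ and
$$ C=W\cdot(ST)^m=(S^mW')\cdot(gT)^m. $$
Splitting each block into atoms gives two factorizations of $C$: the first contains $W$ exactly once (the atoms of $(ST)^m$ avoid $g$, hence are $\neq W$), while the second avoids $W$ altogether, because the atoms of $S^mW'$ avoid $g$ and the unique $g$-containing atom of each copy of $gT$ carries $g$ with multiplicity one --- which differs from $W$ automatically when $m\ge 2$, and which can be forced when $m=1$ using the freedom in the choice of $T$ together with the atoms supplied by part (1).

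It remains to equalize lengths. Let $z_1,z_2$ be the two factorizations of $C$, with lengths $L_1,L_2$ and $\mathsf v_W(z_1)>\mathsf v_W(z_2)$. Passing to $C^{\ell-k}$ and appending a high power $B^N$, I factor the appended part once as $(U_1\cdots U_k)^p(V_1\cdots V_{\ell})^{N-p}$ and once as $(U_1\cdots U_k)^q(V_1\cdots V_{\ell})^{N-q}$; a direct length count shows the two resulting factorizations of $C^{\ell-k}B^N$ have equal length precisely when $q-p=L_2-L_1$, which is solvable with $p,q\in[0,N]$ once $N$ is large. Because $W\notin\{U_i,V_j\}$, the appended blocks contribute nothing to the $W$-multiplicity, so the two factorizations carry $W$ with multiplicities $(\ell-k)\mathsf v_W(z_1)$ and $(\ell-k)\mathsf v_W(z_2)$; these differ, so the factorizations are distinct, yet they have the same length --- contradicting length-factoriality. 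Hence $\mathcal A(G_0)=\{U_1,\ldots,U_k,V_1,\ldots,V_{\ell}\}$.

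I expect the genuine difficulty to sit entirely in the parenthetical claim of the middle paragraph: guaranteeing that the second factorization of $C$ really has smaller $W$-multiplicity. When $W$ is squarefree this can fail naively, since the $g$-atom of $gT$ might coincide with $W$; ruling this out is where the full strength of $G=[G_0\setminus\{g\}]$ and the concrete atoms from part (1) (an atom meeting $g$ with multiplicity one, and atoms of controlled support avoiding $g$) are needed, presumably after distinguishing whether or not $\supp(W)\subseteq\supp(B)$. Everything else is bookkeeping with lengths and the single relation $U_1\cdots U_k=V_1\cdots V_{\ell}$.
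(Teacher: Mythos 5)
Your overall strategy for part (2) --- produce an element admitting two factorizations whose $W$-multiplicities differ, then pad with a large power of $B$ split once as $(U_1\cdots U_k)^p(V_1\cdots V_\ell)^{N-p}$ and once as $(U_1\cdots U_k)^q(V_1\cdots V_\ell)^{N-q}$ to equalize lengths --- is exactly the paper's, and your length bookkeeping in that final step is correct. But there are two gaps. The minor one: you never prove part (1), which you invoke (it is two lines: $-h\in G=[G_0\setminus\{g\}]$ yields an atom supported in $G_0\setminus\{g\}$ containing $h$, and $-g\in[G_0\setminus\{g\}]$ yields a zero-sum sequence $gT$ with $g\nmid T$, whose $g$-atom has $\mathsf v_g=1$), but it is part of the statement. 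The serious one is precisely the step you flag as ``the genuine difficulty'': when $m=\mathsf v_g(W)=1$ you do not show that the $g$-atom of $gT$ can be made different from $W$, and this is not a negligible edge case --- it arises for every $g\in\supp(W)$ whenever $W$ is squarefree, and squarefree atoms are the generic ones in the eventual classification. Worse, your construction can genuinely fail there: the unique atom containing $g$ in any factorization of $gT$ necessarily has $\mathsf v_g=1$, so if $W$ happens to be the only atom of $\mathcal B(G_0)$ with $\mathsf v_g(\cdot)=1$ (nothing established so far excludes this), that atom is forced to be $W$ for \emph{every} admissible choice of $T$, and no ``freedom in the choice of $T$'' rescues the argument at that $g$.

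The paper sidesteps this with a different auxiliary element, and you should either adopt it or supply the missing case. If $|\supp(W)|\ge 2$, then for each $g\in\supp(W)$ pick $h\in\supp(W)\setminus\{g\}$ and apply part (1) to the ordered pair $(h,g)$ to obtain an atom $A_g$ with $g\in\supp(A_g)\subset G_0\setminus\{h\}$; since $h\notin\supp(A_g)$ we get $A_g\ne W$, and $W$ divides $\prod_{g\in\supp(W)}A_g^{\mathsf v_g(W)}$ because the $g$-valuation of that product is at least $\mathsf v_g(A_g)\mathsf v_g(W)\ge\mathsf v_g(W)$. (If $|\supp(W)|=1$, then $W=g^{\ord(g)}$ and $W$ divides $A_1^{\ord(g)}$, where $A_1\ne W$ is the atom with $\mathsf v_g(A_1)=1$.) Rewriting that product as $W X_2\cdots X_t$ gives one element with a factorization of $W$-multiplicity $0$ and one of $W$-multiplicity $\ge 1$, valid for arbitrary $W$ without any case distinction on $\mathsf v_g(W)$; your padding argument then finishes the proof.
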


\begin{proof}
1. Let $g,h\in G_0$ with $g\neq h$. Since $-h\in G=[G_0\setminus \{g\}]$, there is an atom $A_2\in \mathcal A(G_0\setminus \{g\})$ such that $h\in \supp(A_2) \subset G_0\setminus\{g\}$. Since $-g\in G=[G_0\setminus\{g\}]$, there is an atom $A_1\in \mathcal A(G_0)$ such that $\mathsf v_g(A_1)=1$.

2. Suppose $\mathcal B(G_0)$ is  length-factorial. Assume to the contrary there is an atom $A\in \mathcal A(G_0)\setminus\{U_1,\ldots, U_k, V_1,\ldots, V_{\ell}\}$.
If $|\supp(A)|=1$, say $\supp(A)=\{g\}$, then $\ord(g)$ is finite and  by 1. there exists an atom $A_1$ with $\mathsf v_g(A_1)=1$,  whence $A_1\neq A$. Therefore $A$ divides $A_1^{\ord(g)}$.
If $|\supp(A)|\ge 2$, then for every $g\in \supp(A)$, it follows by 1. that there exists an atom $A_g\in \mathcal A(G_0)$ with $g\in \supp(A_g)$ such that $\supp(A)\not\subset \supp(A_g)$. Then $A\neq A_g$ for every $g\in \supp(A)$ and $A$ divides $\prod_{g\in \supp(A)}A_g^{\mathsf v_g(A)}$.

To sum up, there exist  $s\in \N$ and atoms $W_1, \ldots, W_s$ with $A\neq W_i$ for every $i\in [1,s]$ such that
 $A$ divides $W_1\cdot \ldots \cdot W_s$.  We may suppose 	
$W_1\cdot \ldots \cdot W_{s}=AX_2\cdot \ldots \cdot X_t$, where $t\ge 2$ and $X_2,\ldots, X_t\in \mathcal A(G_0)$. If $\ell=k$ or $t=s$, then $\mathcal B(G_0)$ is not length-factorial, a contradiction. Suppose $\ell\neq k$ and $t\neq s$.
By symmetry, we may suppose that  $\ell> k$.
If $t> s$, then
\[
(W_1\cdot \ldots \cdot W_{s})^{\ell-k} (V_1\cdot \ldots \cdot V_{\ell})^{t-s}= (U_1\cdot \ldots \cdot U_k)^{t-s} (AX_2\cdot \ldots \cdot X_t)^{\ell-k}
\]
has two distinct factorizations of length $\ell t-sk$, whence $\mathcal B(G_0)$ is not length-factorial, a contradiction.
If $s> t$, then
\[
(W_1\cdot \ldots \cdot W_{s})^{\ell-k} (U_1\cdot \ldots \cdot U_k)^{s-t}= (V_1\cdot \ldots \cdot V_{\ell})^{s-t} (AX_2\cdot \ldots \cdot X_t)^{\ell-k}
\]
has two distinct factorizations of length $s\ell -tk$, whence $\mathcal B(G_0)$ is not length-factorial, a contradiction.
\end{proof}

\smallskip
\begin{lemma}\label{3.3}
Let $G$ be an abelian group and let $G_0\subset G\setminus\{0\}$ be a subset such that $[G_0\setminus\{g\}]=G$ for all $g\in G_0$. Suppose that  $\mathcal B(G_0)$ is length-factorial but not factorial.
\begin{enumerate}
\item $G_0$ is a minimal non-half-factorial set.

\item For every $g\in G_0$, there exist  $A_1\in \mathcal A(G_0)$ such that $\mathsf v_g(A_1)=1$ and $|\{A\in \mathcal A(G_0)\colon \mathsf v_g(A)>0\}|=2$.

\item For any two distinct atoms $A_1, A_2\in \mathcal A(G_0)$,  either $\supp(A_1)\cap \supp(A_2)=\emptyset$ or $|\gcd(A_1, A_2)|=1$.
\end{enumerate}	
\end{lemma}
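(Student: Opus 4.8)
The plan is to translate length-factoriality into a rank condition on the lattice of relations among the (necessarily finitely many) atoms of $\mathcal B(G_0)$, and then to read off all three assertions by dimension counting, using the hypothesis $[G_0\setminus\{g\}]=G$ at the key points. First I set up the framework. Since $\mathcal B(G_0)$ is not factorial there is an element with two distinct factorizations; canceling common atoms produces a cross-free relation $U_1\cdot\ldots\cdot U_k=V_1\cdot\ldots\cdot V_\ell$ with $k,\ell\ge 2$, so Lemma~\ref{3.2}(2) gives $\mathcal A(G_0)=\{U_1,\ldots,U_k,V_1,\ldots,V_\ell\}=:\{A_1,\ldots,A_n\}$, a finite set. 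As every $g\in G_0$ lies in some atom (because $-g\in G=[G_0\setminus\{g\}]$ can be completed to a zero-sum containing $g$), $G_0$ is finite, say $|G_0|=N$, and $G=[G_0]$ is finitely generated of torsion-free rank $r$. Let $\phi\colon \mathsf q(\mathcal F(G_0))\to G$ be the sum homomorphism $g\mapsto g$. Applying the same completion to an arbitrary $v\in\Ker\phi$ (write $v=v^+-v^-$ and complete $v^-$ by a sequence of sum $-\sigma(v^-)\in G=[G_0]$) shows $\mathsf q(\mathcal B(G_0))=\Ker\phi$, of rank $N-r$. Writing $L=\Ker\bigl(\Z^n\to\mathsf q(\mathcal B(G_0))\bigr)$ for the relation lattice, I claim that $\mathcal B(G_0)$ is length-factorial if and only if $L$ meets $H_0=\{v:\sum_i v_i=0\}$ only in $0$: a nonzero $v\in L\cap H_0$ splits into two distinct equal-length factorizations $v^+,v^-$ of one element, and conversely. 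Since $L\ne 0$ and $H_0$ is a corank-one subgroup, this forces $\operatorname{rank}L=1$, say $L=\Z w$ with $\sum_i w_i\ne 0$; and since in the cross-free relation each atom occurs on exactly one side, the associated vector in $L$ has full support, whence (being a multiple of $w$) $w_i\ne 0$ for all $i$ and $n=N-r+1$.

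For part~(1), fix $g$. A relation among the atoms avoiding $g$ is, after padding by zeros, an element of $L=\Z w$ supported away from the $g$-containing coordinates; but every nonzero element of $L$ has full support while some atom contains $g$, so no such relation exists and $\mathcal B(G_0\setminus\{g\})$ is factorial, hence half-factorial. As half-factoriality passes to subsets and $\mathcal B(G_0)$ itself is not half-factorial, $G_0$ is minimal non-half-factorial. For part~(2), the existence of $A_1$ with $\mathsf v_g(A_1)=1$ is Lemma~\ref{3.2}(1). For the count, observe that the atoms of $\mathcal B(G_0\setminus\{g\})$ are exactly the atoms of $\mathcal B(G_0)$ avoiding $g$; since that monoid is factorial it is free on these atoms, so their number equals $\operatorname{rank}\mathsf q(\mathcal B(G_0\setminus\{g\}))=\operatorname{rank}\Ker\phi_{G_0\setminus\{g\}}=(N-1)-r$, using $[G_0\setminus\{g\}]=G$. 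Hence the number of atoms containing $g$ is $n-(N-1-r)=2$.

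For part~(3), suppose distinct atoms $A_1,A_2$ have $\supp(A_1)\cap\supp(A_2)\ne\emptyset$ and pick $g$ in the intersection. By part~(2) the only atoms containing $g$ are $A_1,A_2$, and one of them has $\mathsf v_g=1$, so $g$ contributes exactly $1$ to $\gcd(A_1,A_2)$. If some $h\ne g$ also lay in $\supp(A_1)\cap\supp(A_2)$, then by part~(2) the only atoms containing $h$ would again be $A_1,A_2$; but $\mathcal B(G_0\setminus\{g\})$ is factorial with atom set $\mathcal A(G_0)\setminus\{A_1,A_2\}$, none of which contains $h$, whereas $-h\in G=[G_0\setminus\{g\}]$ completes $h$ to a zero-sum sequence over $G_0\setminus\{g\}$, forcing some atom of $\mathcal B(G_0\setminus\{g\})$ to contain $h$ --- a contradiction. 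Thus $g$ is the unique common element and $|\gcd(A_1,A_2)|=1$.

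The main obstacle is the first paragraph: making precise the equivalence between length-factoriality and the rank-one condition, and identifying $\mathsf q(\mathcal B(G_0))$ with $\Ker\phi$ cleanly in the presence of torsion in $G$. This is exactly the place where the monoid hypothesis $[G_0\setminus\{g\}]=G$, rather than mere generation of $G$ as a group, is essential, since it guarantees that arbitrary zero-sum differences can be completed. Once the rank-one picture and the two equalities $n=N-r+1$ and $\#\{g\text{-free atoms}\}=N-1-r$ are in hand, parts~(1)--(3) are immediate; I expect no difficulty beyond the bookkeeping of ranks.
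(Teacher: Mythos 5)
Your proof is correct, and it takes a genuinely different route from the paper's. The paper argues combinatorially throughout: for (1) it passes to a minimal non-half-factorial subset $G_1\subset\supp(B_0)$ and uses Lemma~\ref{3.2} to force $G_1=G_0$; for (2) it constructs, for a hypothetical third atom containing $g$, an explicit product of atoms whose two factorizations violate Lemma~\ref{3.2}.2; and (3) is a short consequence of (2). You instead encode everything in the relation lattice $L=\Ker\bigl(\Z^n\to\mathsf q(\mathcal B(G_0))\bigr)$: length-factoriality forces $L\cap H_0=0$, hence $\operatorname{rank}L=1$ with a full-support generator coming from the cross-free relation, and all three assertions fall out of the two counts $n=N-r+1$ and $\#\{\text{atoms avoiding }g\}=N-1-r$ --- the latter because $\mathcal B(G_0\setminus\{g\})$ is factorial, hence free on its atoms, and because $[G_0\setminus\{g\}]=G$ identifies its quotient group with $\Ker\phi_{G_0\setminus\{g\}}$. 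I checked the supporting identifications and they are sound: $\mathsf q(\mathcal B(G_0))=\Ker\phi$ via the completion argument (this is exactly where monoid generation, not just group generation, is needed); the atoms of $\mathcal B(G_0\setminus\{g\})$ are the atoms of $\mathcal B(G_0)$ avoiding $g$ because the submonoid is divisor-closed; and the rank formula $\operatorname{rank}L+\operatorname{rank}H_0-\operatorname{rank}(L\cap H_0)\le n$ does force $\operatorname{rank}L=1$. Your approach buys more than the lemma states --- it shows $\mathcal B(G_0\setminus\{g\})$ is actually factorial (not merely half-factorial), gives the exact atom count $|\mathcal A(G_0)|=|G_0|-\operatorname{rank}(G)+1$, and makes the ``exactly two'' in (2) transparent, which is the most delicate step in the paper's version. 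What the paper's argument buys is that it stays entirely inside the combinatorics of zero-sum sequences, with no appeal to quotient groups or rank bookkeeping; both proofs ultimately rest on the same Lemma~\ref{3.2}.
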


\begin{proof}
Since $\mathcal B (G_0)$ is length-factorial but not factorial, it is not half-factorial.

	1. There is a $B_0 \in \mathcal B(G_0)$ such that $|\mathsf L(B_0)|\ge 2$, which implies that $\supp(B_0)$ is not half-factorial. Let $G_1\subset \supp(B_0)$ be a minimal non-half-factorial subset and let $B_1 \in \mathcal B(G_1)$ such that $|\mathsf L(B_1)|\ge 2$. Then Lemma \ref{3.2}.2 implies $\mathcal A(G_0)=\mathcal A(G_1)$.

Assume to the contrary that $G_0\setminus G_1\neq \emptyset$.
 Let $h\in G_0\setminus G_1$. Then by Lemma \ref{3.2}.1 there is an atom $A\in \mathcal A(G_0)$ with $h\in \supp(A)$, whence $A\not\in \mathcal A(G_1)$, a contradiction. Therefore $G_0=G_1$ is  a minimal non-half-factorial subset.

2. Let $g\in G_0$. By Lemma \ref{3.2}.1, there exists an atom $A_1$ such that $\mathsf v_g(A_1)=1$ and hence $|\supp(A_1)|\ge 2$.
Let $h_0\in \supp(A_1)\setminus\{g\}$. Then Lemma \ref{3.2}.1 implies  there exists an atom  $A_g\in \mathcal B(G_0\setminus\{h_0\})$ such that $g\in \supp(A_g)$. Thus $A_g\neq A_1$.
Furthermore, for every $h\in \supp(A_1)\setminus\{g\}$,  Lemma \ref{3.2}.1 implies that there exists an atom  $A_h\in \mathcal B(G_0\setminus\{g\})$ such that $h\in \supp(A_h)$.

Assume to the contrary that there exists an atom $A_3\in \mathcal A(G_0)\setminus\{A_1,A_g\}$ such that $g\in\supp(A_3)$.
Therefore $$A_3\prod_{h\in G_0\setminus \{g\}}A_h^{\mathsf v_g(A_3)\mathsf v_h(A_1)}=A_1^{\mathsf v_g(A_3)} X_1\cdot \ldots \cdot X_{s}\,,$$
where $s\in \N$ and $X_1,\ldots,X_s\in \mathcal A(G_0\setminus \{g\})$.
It follows by Lemma \ref{3.2}.2 that
$A_g\in \{A_1,A_3\}\cup \{A_h\colon h\in \supp(A)\setminus\{g\}\}\cup \{X_i\colon i\in [1,s]\}$, a contradiction.

\medskip
3. Let $A_1, A_2\in \mathcal A(G_0)$ be  distinct  such that $\supp(A_1)\cap \supp(A_2)\neq\emptyset$. Assume to the contrary that there are $g,h\in G_0$ such that $gh$ divides  $\gcd(A_1, A_2)$.
 By 2., there is no other atom $A$ such that $\supp(A)\cap \{g,h\}\neq \emptyset$. If $g=h$, then there is no atom $A$ with $\mathsf v_g(A)=1$, a contradiction to 2. If $g\neq h$, then $-h\in [G_0\setminus\{g\}]$ implies that there is an atom $A\in \mathcal A(G_0\setminus \{g\})$ with $h\in \supp(A)$, a contradiction.
\end{proof}

\smallskip
\begin{proof}[Proof of Theorem \ref{1.1}]
Let $H$ be a Krull monoid. By \cite[Theorem 2.4.8]{Ge-HK06a}, there is a decomposition $H = H^{\times} \times H_0$, where $H_0$ is a reduced Krull monoid, isomorphic to $H_{\red}$. If $P_0 \subset H_0$ is the set of prime elements of $H_0$ and $H^* = \{ a \in H_0 \colon p \nmid a \ \text{for all} \ p \in P_0\}$, then $H_0 = \mathcal F (P_0) \times H^*$ (\cite[Theorem 1.2.3]{Ge-HK06a}). Clearly, $H^*$ is a reduced Krull monoid. By definition, $H$ is length-factorial if and only if $H_{\red} \cong H_0$ is length-factorial, and $H_0$ is length-factorial if and only if $H^*$ is length-factorial.

Let $H^* \hookrightarrow \mathcal F (P^*)$ be a divisor theory. Then $H_0  = \mathcal F (P_0) \times H^* \hookrightarrow \mathcal F (P_0) \times \mathcal F (P^*) = \mathcal F (P)$, where $P = P_0 \uplus P^*$,  is a divisor theory, whence we obtain that (we use \eqref{classgroup})
\[
\begin{aligned}
\mathcal C (H) & = \mathcal C (H_0) = \mathsf q ( \mathcal F (P))/\mathsf q (H_0) \\
               & =  \mathsf q ( \mathcal F (P_0)) \times \mathsf q ( \mathcal F (P^*)) / \mathsf q ( \mathcal F (P_0)) \times \mathsf q (H^*) \\
               & \cong \mathsf q ( \mathcal F (P^*)) / \mathsf q (H^*) = \mathcal C (H^*) \,.
\end{aligned}
\]
Let $G_{P^*} \subset \mathcal C (H^*)$ denote the set of classes containing prime divisors, and note that $0\not\in G_{P^*}$.
It remains to prove the characterization of length-factoriality.
Note that the {\it Moreover} statement, dealing with the case of torsion class groups, follows immediately from the main statement. We proceed in two steps.

\medskip
\noindent
{\bf Step 1.} Suppose that $H$ and $H^*$ are length-factorial but not factorial.

Assume to the contrary that there exist distinct $p, q \in P^*$  such that $0 \ne [p] = [q] \in \mathcal C (H^*)$. Since $H^* \hookrightarrow \mathcal F (P^*)$ is a divisor theory, there exist $r\ge 2$ and pairwise distinct $a_1,\ldots, a_r\in H^*$ such that $p=\gcd(a_1,\ldots, a_r)$. Without loss of generality, we may assume that $a_1,\ldots, a_r\in \mathcal A(H^*)$.

Let $a_1=p^kq_1 \cdot \ldots \cdot q_sp_2 \cdot \ldots \cdot p_{\ell}$, where $k\ge 1$, $s\ge 0$, $\ell\ge 1$, $q_1,\ldots, q_s\in P^* \setminus\{p\}$ with $[q_j]=[p]$ for $j\in [1,s]$, and $p_2,\ldots, p_{\ell}\in P^*$ with $[p_i]\neq [p]$ for $i\in [2,\ell]$. If $k+s\ge 2$, then $b_1=p^{k+s}p_2 \cdot \ldots \cdot p_{\ell}$ and $b_2=q^{k+s}p_2 \cdot \ldots \cdot p_{\ell}$ are both atoms of $H^*$. We observe that
\[
b_1 b_2 = (p^{k+s-1}q p_2 \cdot \ldots \cdot p_{\ell}) (p q^{k+s-1} p_2 \cdot \ldots \cdot p_{\ell})
\]
has two distinct factorizations of length two, a contradiction. Thus $k+s=1$ and $a_1=pp_2 \cdot \ldots \cdot p_{\ell}$. Similarly, we may assume that $a_2=pp_2' \cdot \ldots \cdot p_{\ell'}'$, where $\ell'\ge 2$ with $[p_i']\neq [p]$ for $i\in [2,\ell']$.
We observe that
\[
a_1 ( q p_2' \cdot \ldots \cdot p_{\ell'}') = (q p_2 \cdot \ldots \cdot p_{\ell}) a_2
\]
has two distinct factorizations of length two, a contradiction. Therefore, every nonzero class $g \in \mathcal C (H^*)$ contains at most one prime divisor. Thus, Lemma \ref{3.1}.2 implies that ${\boldsymbol \beta} \colon H^* \to \mathcal B (G_{P^*})$ is an isomorphism, whence $H^* \cong \mathcal B (G_{P^*})$
and $\mathcal B(G_{P^*})$ is length-factorial but not factorial.

It remains to determine the structure of $G_{P^*}$. Since $H^* \hookrightarrow \mathcal F (P^*)$ is a divisor theory and every class of $\mathcal C (H^*)$ contains at most one prime divisor, we obtain that $\mathcal C (H^*) = [G_{P^*} \setminus \{g\}]$ for all $g \in G_{P^*}$ by Lemma \ref{3.1}.3. Thus, the assumption of  Lemma \ref{3.3} is satisfied which implies that   $G_{P^*}$ is  a minimal non-half-factorial set. Let $B\in \mathcal B(G_{P^*})$ with $|\mathsf L(B)|\ge 2$ and let   $|B|$ be minimal with this property, say
\[
B=U_0U_1 \cdot \ldots \cdot U_k=V_0V_1 \cdot \ldots \cdot V_{\ell} \,,
\]
where $k,\ell\in \N$ with $k\neq \ell$, and $U_0,U_1,\ldots, U_k,V_0,V_1,\ldots, V_{\ell}\in \mathcal A(G_{P^*})$. Then Lemma \ref{3.2}.2 implies that
$$\{U_0,U_1,\ldots, U_k,V_0,V_1,\ldots, V_{\ell}\}=\mathcal A(G_{P^*})\,.$$
The minimality of $|B|$ implies that $U_i\neq V_j$ for every $i\in [0,k]$ and every $j\in [0,\ell]$.
If there exist $j\in [0,\ell]$ and a proper subset $I\subsetneq [0,k]$ such that $V_j$ divides $\prod_{i\in I}U_i$, then $\prod_{i\in I}U_i$ has two distinct factorizations, a contradiction to either the minimality of $|B|$ or the length-factoriality of $\mathcal B(G_{P^*})$. Therefore, $\gcd(U_i,V_j) \ne 1 \in \mathcal F (G_{P^*})$  for every $i\in [0,k]$ and $j\in [0,\ell]$, whence $|\gcd(U_i,V_j)|=1$ by Lemma \ref{3.3}.3. It follows that  $|U_i|=\ell+1$ and $|V_j|=k+1$ for every $i\in [0,k]$ and $j\in [0,\ell]$. Since
$$|\gcd(\prod_{i\in I}U_i, B)|=|I|(\ell+1),\ |\gcd(\prod_{i\in I}U_i,\prod_{j\in J}V_j)|\le |I||J|,  \text{ and } |\gcd(\prod_{i\in I}U_i,\prod_{j\in [0,\ell]\setminus J}V_j)|\le |I|(\ell+1-|J|)$$
for every $I\subset [0,k]$ and every $J\subset [0,\ell]$, we obtain that \begin{equation}\label{e1}
|\gcd(\prod_{i\in I}U_i,\prod_{j\in J}V_j)|=|I||J|\,.
\end{equation}
For every $g\in G_{P^*}$, there exist $i\in [0,k]$ and $j\in [0,\ell]$ such that $g\in \supp(U_i)\cap \supp(U_j)$. Then, by Lemma \ref{3.3}.2,  for any $i_1,i_2\in [0,k]$ and any $j_1,j_2\in [0,\ell]$
we have either $U_{i_1}=U_{i_2}$ or $\supp(U_{i_1})\cap \supp(U_{i_2})=\emptyset$
and either $V_{j_1}=V_{j_2}$ or $\supp(V_{j_1})\cap \supp(V_{j_2})=\emptyset$.

Assume to the contrary that there exist distinct $i_1,i_2\in [0,k]$ and distinct $j_1,j_2\in [0,\ell]$  such that $U_{i_1}=U_{i_2}$ and $V_{j_1}=V_{j_2}$.  Then $\gcd(U_{i_1}, V_{j_1})=g$ for some $g\in G_{P^*}$ and hence $\gcd(U_{i_1}U_{i_2}, V_{j_1}V_{j_2})=g^2$, a contradiction to Equation \eqref{e1}.
Thus, by symmetry, we may suppose $U_{i_1} \neq U_{i_2}$ for any distinct $i_1,i_2\in [0,k]$. Therefore $\supp(U_{i_1})\cap \supp(U_{i_2})=\emptyset$ for all distinct $i_1, i_2 \in [0,k]$.
Assume to the contrary that there exist $g\in G_{P^*}$ and $j\in [0,\ell]$ such that $\mathsf v_g(V_j)\ge 2$. Then there is $i\in [0,k]$ such that $\mathsf v_g(U_i)\ge 2$, and hence there is no atom $A
\in \mathcal A(G_{P^*})$ with $\mathsf v_g(A)=1$, a contradiction to Lemma \ref{3.3}.2. Thus $\mathsf v_g (V_j)=1$ for all $g \in \supp (V_j)$ and all  $j\in [0,\ell]$.

We set $U_1=g_1^{s_0}e_{1,1}^{s_1} \cdot \ldots \cdot e_{1,t}^{s_t}$, where $s_0,\ldots,s_t\in \N$ and $g_1,e_{1,1},\ldots, e_{1,t}\in G_{P^*}$ are pairwise distinct. After renumbering if necessary, we may suppose $e_{1,i}\in \supp(V_i)$ for every $i\in [1,t]$ and $g_1 \in \supp(V_0)$.
Note that if $\supp(V_{j_1})\cap \supp(V_{j_2})\neq \emptyset$, then $V_{j_1}=V_{j_2}$, where $j_1,j_2\in [0,\ell]$.
Therefore,
\[
B=U_0 \cdot \ldots \cdot U_k=V_0 \cdot \ldots \cdot V_{\ell}=V_0^{s_0}V_1^{s_1} \cdot \ldots \cdot V_t^{s_t}\,.
\]
The length-factoriality of $\mathcal B(G_{P^*})$ implies that $k+1\neq s_0+\ldots+s_t$.
Since $\supp(U_{i_1})\cap \supp(U_{i_2})=\emptyset$ for any two distinct $i_1,i_2\in [0,k]$,
 $U_1, \ldots, U_k$ and $V_1, \ldots , V_t$ can be written as the form
\[
U_i=g_i^{s_0}e_{i,1}^{s_1} \cdot \ldots \cdot e_{i,t}^{s_t}, \ V_j=e_{0,j}e_{1,j} \cdot \ldots \cdot e_{k,j}
\]
and
\[
U_0=g_0^{s_0}e_{0,1}^{s_1}\ldots e_{0,t}^{s_t}, \quad \text{ and } \quad V_{0}=g_0g_1 \cdot \ldots \cdot g_k \,,
\]
where $e_{2,1},\ldots, e_{2,t},\ldots, e_{k,1},\ldots,e_{k,t}, g_2,\ldots,g_k\in G_{P^*}$,
$g_0=-\sum_{i=1}^kg_i$, and $e_{0, j}=-\sum_{i=1}^ke_{i,j}$ for every $j\in [1,t]$.
For each $i\in [0,k]$,   $\mathcal B(\{g_i,e_{i,1},\ldots,e_{i,t}\})$ is half-factorial and length-factorial, whence it is factorial and $\mathcal A(\{g_i,e_{i,1},\ldots,e_{i,t}\})=\{U_i\}$.  Thus, we obtain that $s_0$ is the minimal integer such that $-s_0g_i\in \langle e_{i,1},\ldots,e_{i,t}\rangle$.

In order to  show that $(e_{1,1},\ldots, e_{1,t}, e_{2,1},\ldots, e_{2,t}, \ldots, e_{k-1,1},\ldots, e_{k-1,t})$ is  independent we set
\[
G_1=\{e_{1,1},\ldots, e_{1,t}, e_{2,1},\ldots, e_{2,t}, \ldots, e_{k-1,1},\ldots, e_{k-1,t}\} \,.
\]
Assume to the contrary that the above tuple is not independent. Then there are two distinct  $T_1, T_2 \in \mathcal F (G_1)$ such that $\sigma(T_1)=\sigma(T_2)$. By symmetry, we may assume that $T_1 \ne 1_{\mathcal F (G_1)}$.
There exist non-negative integers $x_1,\ldots, x_t$ with $x_1+\ldots+x_t=|T_1|$ such that $T_1$ divides $V_1^{x_1} \cdot \ldots \cdot V_t^{x_t}$ in $\mathcal F (G_1)$, whence $V_1^{x_1} \cdot \ldots \cdot V_t^{x_t}T_2T_1^{-1}$ is a zero-sum sequence. Since $V_1^{x_1} \cdot \ldots \cdot V_t^{x_t}T_2T_1^{-1}$ has only one factorization and $V_1,\ldots, V_t$ are the only atoms dividing $V_1^{x_1} \cdot \ldots \cdot V_t^{x_t}T_2T_1^{-1}$, it follows that $V_1^{x_1} \cdot \ldots \cdot V_t^{x_t}T_2T_1^{-1}=V_1^{x_1} \cdot \ldots \cdot V_t^{x_t}$ and hence $T_1=T_2$, a contradiction.

 Next we show that $\langle g_i,e_{i,1},\ldots, e_{i,t}\rangle \cap \langle g_j,e_{j,1},\ldots, e_{j,t}\colon j\in [1,k]\setminus\{i\} \rangle =\{0\}$ for every $i\in [1,k]$. Assume to the contrary that there exists $0\neq h\in \langle g_i,e_{i,1},\ldots, e_{i,t}\rangle \cap \langle g_j,e_{j,1},\ldots, e_{j,t}\colon j\in [1,k]\setminus\{i\} \rangle$. Since $\langle g_i,e_{i,1},\ldots, e_{i,t}\rangle=[ g_i,e_{i,1},\ldots, e_{i,t}]$ and $\langle g_j,e_{j,1},\ldots, e_{j,t}\colon j\in [1,k]\setminus\{i\} \rangle=[ g_j,e_{j,1},\ldots, e_{j,t}\colon j\in [1,k]\setminus\{i\} ]$, there exist a zero-sum free sequence $T_1$ over $\{g_i,e_{i,1},\ldots, e_{i,t}\}$ and a zero-sum free sequence $T_2$ over $\{ g_j,e_{j,1},\ldots, e_{j,t}\colon j\in [1,k]\setminus\{i\} \}$ such that $h=\sigma(T_1)=\sigma(T_2)$. Let $N$ be large enough such that $T_1$ divides $U_i^N$. Then $U_i^NT_2T_1^{-1}$ is a zero-sum sequence such that $\supp(U_i^NT_2T_1^{-1})\cap \{ g_j,e_{j,1},\ldots, e_{j,t}\colon j\in [1,k]\setminus\{i\} \}\neq\emptyset$, which implies that there exists $\nu \in [1,k]\setminus\{i\}$ such that $U_{\nu}$ divides $U_i^NT_2T_1^{-1}$ and hence $U_{\nu}$ divides $T_2$, a contradiction.
 Therefore, we obtain that
 \[
 \mathcal C (H^*) =\langle G_{P^*}\rangle=\langle e_{1,1},\ldots,e_{1,t},g_1 \rangle \oplus \ldots \oplus \langle e_{k,1},\ldots,e_{k,t},g_k \rangle\,.
\]

Let $i\in [1,k]$ and set $G_i=\langle g_i,e_{i,1},\ldots,e_{i,t} \rangle$. Then  $G_i \cong\Z^t\oplus \Z/m\Z$, where $m\in\N$ is the maximal order of all the torsion elements of $G_i$. Let $\gcd(s_0,s_1,\ldots,s_t)=n$. Then the fact that $h=\sigma(g_i^{s_0/n}e_{i,1}^{s_1/n} \cdot \ldots \cdot e_{i,t}^{s_t/n})$ has order $n$ implies that $n\le m$. It remains to verify that $n\ge m$. Let $\alpha\in G_i$ such that $\ord(\alpha)=m$.
Suppose $\alpha=w_0g_i+w_1e_{i,1}+\ldots+w_te_{i,t}$, where $w_0,\ldots,w_t\in \N_0$. Then
$(g_i^{w_0}e_{i,1}^{w_1} \cdot \ldots \cdot e_{i,t}^{w_t})^{m}=U_i^w$ for some $w\in \N$ with $\gcd(m,w)=1$, which implies that $m$ divides $\gcd(s_0,s_1,\ldots,s_t)=n$.

\smallskip
\noindent
{\bf Step 2.}  Suppose that $H^* \cong \mathcal B (G_{P^*})$ and that $G_{P^*}$ has the given form. We have to show that $\mathcal B (G_{P^*})$ is length-factorial but not factorial.

We use the simple fact that if an abelian group $G$ is a direct sum, say $G = G_1 \oplus G_2$, and if $G_i' \subset G_i$ are subsets for $i \in [1,2]$, then $\mathcal A (G_1' \uplus G_2') = \mathcal A (G_1') \uplus \mathcal A (G_2')$.
We define, for every $i\in [0,k]$ and  every $j\in [1,t]$,
\[
U_0 = g_0^{s_0}e_{0,1}^{s_1} \cdot \ldots \cdot e_{0,t}^{s_t}, \ U_i=e_{i,0}^{s_0} \cdot \ldots \cdot e_{i,t}^{s_t}, \ V_0=g_0 \cdot \ldots \cdot  g_k, \quad \text{ and } \quad V_j=e_{0,j} \cdot \ldots \cdot  e_{k,j} \,.
\]
Clearly, we obtain that
\begin{equation} \label{structure of atoms}
\mathcal A(G_{P^*})=\{U_0,\ldots, U_k, V_0,\ldots, V_t\} \quad \text{and} \quad U_0 \cdot \ldots \cdot U_k=V_0^{s_0} \cdot \ldots \cdot V_t^{s_t} \,.
\end{equation}
Thus, $\mathcal B (G_{P^*})$ is not factorial.
By definition, we have $|U_i|=\sum_{j=0}^ts_j$, $|V_j|=k+1$ for every $i\in [0,k]$ and every $j\in [0,t]$.
Assume to the contrary that there exists $B_0\in \mathcal B(G_{P^*})$ such that $B_0$ has two distinct factorizations of the same length. We may assume that $B_0$ is a counterexample with minimal length.
Suppose $$B_0=\prod_{i\in I_1}U_i^{a_i}\prod_{j\in J_1}V_j^{b_j} \ \text{ and } \ B_0=\prod_{i\in I_2}U_i^{a_i'}\prod_{j\in J_2}V_j^{b_j'}$$ are two distinct factorizations of the same length,
where $I_1,I_2\subset [0,k]$, $J_1,J_2\subset [0,t]
$, $a_i\in \N_0$ for every $i\in I_1$, $a_i'\in \N_0$ for every $i\in I_2$, $b_j\in \N_0$ for every $j\in J_1$, and $b_j'\in \N_0$ for every $j\in J_2$. The minimality of $|B_0|$ implies that $I_1\cap I_2=\emptyset$ and $J_1\cap J_2=\emptyset$. If $I_1\cup I_2=\emptyset$, then those two factorizations of $B_0$ must be equal, a contradiction. By symmetry, we may suppose
$I_1\neq\emptyset$. Then $\cup_{i\in I_1}\supp(U_i)\subset \supp(\prod_{i\in I_2}U_i^{a_i'}\prod_{j\in J_2}V_j^{b_j'})$ implies that $J_2=[0,t]$ and $J_1=\emptyset$, whence $I_1=[0,k]$ and $I_2=\emptyset$. It follows that
$$B_0=\prod_{i=0}^kU_i^{a_i}=\prod_{j=0}^tV_j^{b_j'}\,,$$
whence $(s_0+\ldots +s_t)\sum_{i=0}^ka_i=\sum_{i=0}^ka_i|U_i|=|B_0|=\sum_{j=1}^tb_j'|V_j|=(k+1)\sum_{j=1}^tb_j'$.
Since $s_0+\ldots +s_t\neq k+1$, we obtain $\sum_{i=0}^ka_i\neq \sum_{j=1}^tb_j'$, a contradiction to the fact that the two factorizations have the same length.
\end{proof}

\smallskip
The system of sets of lengths $\mathcal L (H)$ of an atomic monoid $H$ is said to be {\it additively closed} if the sumset $L_1 + L_2 \in \mathcal L (H)$ for all $L_1, L_2 \in \mathcal L (H)$. Clearly,  $L_1 + L_2 = L_1$ implies that $L_2 = \{0\}$ for all nonempty sets $L_1, L_2 \subset \N_0$, whence set addition is a  unit-cancellative operation. Thus, $\mathcal L (H)$ is additively closed if and only if $(\mathcal L (H), + )$ is a reduced monoid with set addition as operation.

Let $H$ be a  Krull monoid class group $G$ and let $G_0 \subset G$ denote the set of classes containing prime divisors. Then the inclusion $\mathcal B ( G_0) \hookrightarrow \mathcal F (G_0)$ is a divisor homomorphism but it need not be a divisor theory (\cite{Sc10a}). In Corollary \ref{1.2} we prove that in case of length-factorial Krull monoids this inclusion is a divisor theory.

\smallskip
\begin{proof}[Proof of Corollary \ref{1.2}]
Let $H$ be a length-factorial Krull monoid, that is not factorial,  and let all notation be as in Theorem \ref{1.1}.

1. Since the inclusion $H^* \hookrightarrow \mathcal F (P^*)$ is a divisor theory, $H^* \cong \mathcal B (G_{P^*})$, and every class of $G_{P^*}$ contains precisely one prime divisor, the inclusion $\mathcal B (G_{P^*}) \hookrightarrow \mathcal F (G_{P^*})$ is a divisor theory with class group isomorphic to $\mathcal C (H^*) \cong \mathcal C (H)$. The assertion on $\mathcal A (G_{P^*})$ follows from Equation \eqref{structure of atoms}.

\smallskip
2. Let $B \in \mathcal B (G_{P^*})$ and $z \in \mathsf Z (B)$. By \eqref{structure of atoms},
$z$  can be written in the form
\[
z =\prod_{i=0}^k U_i^{c_i}\prod_{j=0}^{t}V_j^{d_j} \in \mathsf Z (B) \,,
\]
where $c_i,d_j\in \N_0$ for every $i\in [0,k]$ and every $j\in [0,t]$, and we have to determine the relations between the exponents $c_1, \ldots, c_k, d_1, \ldots , d_t$.
Let $$x_1=\min\{c_i\colon i\in [0,k]\} \text{ and } x_2=\min\left\{\left\lfloor\frac{d_j}{s_j}\right\rfloor\colon j\in [0,t]\right\}\,.$$
Then
\[
z=\prod_{i=0}^k U_i^{c_i}\prod_{j=0}^{t}V_j^{d_j}=(U_0 \cdot \ldots \cdot U_k)^{x_1}(V_0^{s_0} \cdot \ldots \cdot V_t^{s_t})^{x_2}\prod_{i=0}^k U_i^{c_i-x_1}\prod_{j=0}^{t}V_j^{d_j-x_2s_j}\,.
\]
We set $x=x_1+x_2$, $y_i=c_i-x_1$, and $z_j=d_j-x_2s_j$ for every $i\in [0,k]$ and every $j\in [0,t]$.
Thus,
\[
B =(U_0 \cdot \ldots \cdot U_k)^x\ \prod_{i=0}^k U_i^{y_i} \ \prod_{j=0}^tV_j^{z_j}
\]
has a factorization of the required form. Since for every $\nu \in [0,x]$,
\[
z' =  (U_0 \cdot \ldots \cdot U_k)^{\nu} (V_0^{s_0} \cdot \ldots \cdot V_t^{s_t})^{x - \nu} \ \prod_{i=0}^k U_i^{y_i} \ \prod_{j=0}^tV_j^{z_j}    \in \mathsf Z (B) \,,
\]
we have $$|z|\in \sum_{i=0}^ky_i + \sum_{j=0}^t z_j + \Big\{\nu (k+1)+(x-\nu)\sum_{j=0}^ts_j\colon \nu \in [0,x] \Big\} \subset \mathsf L(B)\,.$$
If $B$ can be written uniquely in the asserted form then,
since $z$ is chosen arbitrary, it follows that
\[
\mathsf L (B) =  \sum_{i=0}^ky_i + \sum_{j=0}^t z_j + \Big\{\nu (k+1)+(x-\nu)\sum_{j=0}^ts_j\colon \nu \in [0,x] \Big\} \,.
\]
It remains to verify  the uniqueness assertion. Suppose that
\[
B=(U_0 \cdot \ldots \cdot U_k)^x\ \prod_{i=0}^kU_i^{y_i} \ \prod_{j=0}^tV_j^{z_j}=(U_0 \cdot \ldots \cdot U_k)^{x'}\ \prod_{i=0}^kU_i^{y_i'} \ \prod_{j=0}^tV_j^{z_j'}\,, \quad \text{where}
\]
\begin{itemize}
\item $x, y_0,\ldots, y_k, z_0,\ldots,z_t \in \N_0$,   $y_i=0$ for some  $i\in [0,k]$,  and   $z_j<s_j$ for some $j\in [0,t]$, and

\item     $x', y_0',\ldots, y_k', z_0',\ldots,z_t' \in \N_0$,   $y_{i_0}' =0$ for some  $i_0 \in [0,k]$,  and   $z_{j_0}' < s_{j_0} $ for some $j_0 \in [0,t]$.
\end{itemize}
Note, if there would exist $i\in [0,k]$ such that $U_i$  divides $\prod_{j=1}^tV_j^{z_j'}$, then $$s_{j_0}=\mathsf v_{e_{i,j_0}}(U_i)\le \mathsf v_{e_{i,j_0}}(\prod_{j=1}^tV_j^{z_j'})=z_{j_0'}\,,$$
a contradiction.
If $x>x'$, then $U_{i_0}^{y_{i_0}+1}$ divides $\prod_{i=1}^kU_i^{y_i'} \ \prod_{j=1}^tV_j^{z_j'}$. Since $\supp(U_{i_0})\cap \supp(U_i)=\emptyset$ for every $i\in [0,k]\setminus\{i_0\}$, we have $U_{i_0}^{y_{i_0}+1}$ divides $U_{i_0}^{y_{i_0}'} \ \prod_{j=1}^tV_j^{z_j'}=\prod_{j=1}^tV_j^{z_j'}$,
 a contradiction. Thus $x\le x'$. By symmetry, we obtain that  $x'\le x$, whence $x=x'$.
If $y_i>y_i'$ for some $i\in [0,k]$, then $U_i$ must divide $\prod_{j=1}^tV_j^{z_j'}$, a contradiction. Thus $y_i\le y_i'$ for every $i\in [0,k]$. By symmetry, we obtain  that $y_i'\le y_i$, whence $y_i=y_i'$ for every $i\in  [0,k]$.
Since $x=x'$ and $y_i=y_i'$ for every $i\in  [0,k]$, we infer that  $ \prod_{j=1}^tV_j^{z_j}=\prod_{j=1}^tV_j^{z_j'}$, whence $z_j=z_j'$ for every $j\in [0,t]$.

\smallskip
3. By Lemma \ref{2.3}.3, Lemma \ref{3.1}.1, and Theorem \ref{1.1}, we have $\mathcal L(H)=\mathcal L \big( \mathcal B (G_{P^*}) \big)$. By item 2., we infer  that
\[
\mathcal L(H)\subset \Big\{  \big\{y+ \nu (k+1)+(x-\nu)\sum_{j=0}^ts_j\colon \nu \in [0,x] \big\} \colon y,x\in \N_0 \Big\} \,.
\]
Conversely, if $y,x\in \N_0$ and  $B=(U_0 \cdot \ldots \cdot U_k)^xU_0^y$,
then $\{y+\nu (k+1)+(x-\nu )\sum_{j=0}^ts_j \colon \nu \in [0,x]\}=\mathsf L(B)\in \mathcal L(H)$, whence
\[
\mathcal L(H)= \Big\{  \big\{y+ \nu (k+1)+(x-\nu)\sum_{j=0}^ts_j\colon \nu \in [0,x] \big\} \colon y,x\in \N_0 \Big\} \,.
\]
The given description shows immediately that  $\mathcal L(H)$ is additively closed with respect to set addition.
\end{proof}

\smallskip
Before proving Corollary \ref{1.3} we briefly recall the involved concepts. Let $H$ be a Krull monoid and $H_{\red} \hookrightarrow F = \mathcal F (P)$ be a divisor theory. Then $H$ satisfies the {\it approximation property} if one of the following equivalent conditions is satisfied (\cite[Proposition 2.5.2]{Ge-HK06a}):
\begin{itemize}
\item[(a)] For all $n \in \N$ and distinct $p, p_1, \ldots, p_n \in P$ there exists some $a \in H$ such that $\mathsf v_p (a)=1$ and $\mathsf v_{p_i} (a) = 0$ for all $i \in [1,n]$.

\item[(b)] For all $a, b\in F$, there exists some $c \in F$ such that $[a]=[c] \in G$ and $\gcd (b,c)=1$.
\end{itemize}

\begin{proof}[Proof of Corollary \ref{1.3}]
Let $H$ be a Krull monoid. Without restriction we may suppose that $H$ is reduced. Using the notation of Theorem \ref{1.1}, we have $H = \mathcal F (P_0) \times H^*$ and a divisor theory $\mathcal F (P_0) \times H^* \hookrightarrow \mathcal F (P_0) \times \mathcal F (P^*)$. Let $G_{P^*} \subset \mathcal C (H^*) \cong \mathcal C (H)$ denote the set of classes containing prime divisors.

1. If $H$ and $H^*$ are length-factorial but not factorial, then $P^*$ is finite by Theorem \ref{1.1}. Thus, Condition (b) above cannot hold, whence $H$ does not satisfy the approximation property.

\smallskip
2. Suppose that every nonzero class of $G = \mathcal C (H^*)$ contains a prime divisor. Note that $0 \in \mathcal A ( G)$ is the only prime element of $\mathcal B (G)$ and  $\mathcal B (G) = \mathcal F ( \{0\}) \times \mathcal B (G \setminus \{0\})$. Thus $\mathcal B (G)$ is length-factorial if and only if $\mathcal B (G \setminus \{0\})$ is length-factorial.

First, we suppose that $H^* \cong \mathcal B (G \setminus \{0\})$ and that either $|G| \le 3$ or $G \cong C_2 \oplus C_2$. We have to verify that $\mathcal B (G)$ is length-factorial.
If $|G| \le 2$, then $\mathcal B (G)$ is factorial and hence length-factorial. If $|G|=3$ or $G \cong C_2 \oplus C_2$, then it can be checked directly that $\mathcal B (G)$ is length-factorial.

Conversely, suppose that $H^*$ is length-factorial. Since $G \setminus \{0\} \subset G_{P^*}$, the description of $G_{P^*}$ achieved in Theorem \ref{1.1} implies that $|G| \le 3$ or $G \cong C_2 \oplus C_2$.
\end{proof}

In order to prove Corollary \ref{1.4}, we first gather some basics from the theory of rings with zero-divisors. Let $R$ be a commutative ring with identity and let $R^{\bullet}$ denote its monoid of regular elements. Then $R$ is {\it additively regular} if for each pair of elements $a,b \in R$ with $b$ regular, there is an element $r \in R$ such that $a+br$ is a regular element of $R$ (\cite{Hu88a,Lu16a}). Every additively regular ring is a Marot ring and every Marot ring is a $v$-Marot ring. The ring $R$ is a Krull ring if it is completely integrally closed and satisfies the ACC on regular divisorial ideals. If $R$ is a Krull ring, then $R^{\bullet}$ is a Krull monoid and if $R$ is a $v$-Marot ring, then the converse holds (\cite[Theorem 3.5]{Ge-Ra-Re15c}). We say that $R$ is atomic (factorial, half-factorial, resp. length-factorial) if $R^{\bullet}$ has the respective property.

Next we need the concept of normalizing Krull rings. A cancellative but not necessarily commutative semigroup $S$ (resp. a ring $R$) is said to be {\it normalizing} if $aS = Sa$ for all $a \in S$ (resp. $aR = Ra$ for all $a \in R$). A prime Goldie ring is said to be a Krull ring (or a Krull order) if it is completely integrally closed (equivalently, a maximal order) and satisfies the ACC on two-sided divisorial ideals. Thus, every commutative Krull domain is a normalizing Krull ring. For examples and background on  non-commutative (normalizing) Krull rings we refer to \cite{Wa-Je86a, Le-Oy86, Ok16a, Ak-Ma16a}, and for background on factorizations in the non-commutative setting  to \cite{Ba-Sm15, Sm16a}. In particular, normalizing Krull monoids are transfer Krull.

\smallskip
\begin{proof}[Proof of Corollary \ref{1.4}]
1. Let $R$ be an additively regular Krull ring. Then $R^{\bullet}$ satisfies the approximation property by \cite[Theorem 2.2]{Os99a} (this  needs the assumption that $R$ is additively regular). Thus $R^{\bullet}$ is a Krull monoid satisfying the  approximation property, whence the assertion follows from Corollary \ref{1.3}.1.

2. Let $R$ be a normalizing Krull ring. Then $R$ satisfies the approximation property (\cite[Proposition 2.9]{Le-Oy86}, \cite[Theorem 4]{Ma82a}). If $H$ denotes the monoid of regular elements, then $H_{\red}$ is a commutative Krull monoid by \cite[Corollary 4.14 and Proposition 5.1]{Ge13a}. Thus, the assertion follows from Corollary \ref{1.3}.1.
\end{proof}

\smallskip
\begin{proof}[Proof of Corollary \ref{1.5}]
Let $H$ be a length-factorial transfer Krull monoid. We have to show that $H_{\red}$ is a Krull monoid. Since $H_{\red}$ is a length-factorial transfer Krull monoid, we may suppose that $H$ is reduced. Let $B$ be a Krull monoid and let $\theta' \colon H \to B$ be a transfer homomorphism. We may suppose that $B$ is reduced and start with the following assertion.

\noindent
{\bf A.} $H$ is cancellative.

\noindent
{\it Proof of} \,{\bf A}.\,
Let $a,b,c\in H$  such that $ab=ac$.   Since $\theta'(a)\theta'(b)=\theta'(a)\theta'(c)$, we obtain that $\theta'(b)=\theta'(c)$. If $\theta'(b)=\theta'(c)=1_B$, then $b=c=1_H$. If $\theta'(b)=\theta'(c)= w_1 \cdot \ldots \cdot w_r$, where $r\in \N$ and $w_1,\ldots,w_r\in \mathcal A(B)$, then there exist $b_1,\ldots,b_r,c_1,\ldots,c_r\in \mathcal A(H)$ such that
	$b=b_1 \cdot \ldots \cdot b_r$ and $c=c_1 \cdot \ldots \cdot c_r$. Suppose $a=a_1 \cdot \ldots \cdot a_k$, where $k\in \N_0$ and $a_1,\ldots, a_k\in \mathcal A(H)$. Then the two factorizations
	$z_1=a_1 \cdot \ldots \cdot a_kb_1 \cdot \ldots \cdot b_r \in \mathsf Z (ab)$ and $z_2=a_1 \cdot \ldots \cdot a_kc_1 \cdot \ldots \cdot c_r \in \mathsf Z (ab)$ of $ab$ have the same length $k+r$, whence  $z_1=z_2$. Thus $b_1 \cdot \ldots \cdot b_r=c_1 \cdot \ldots \cdot c_r \in \mathsf Z (H)$, whence $b=c \in H$.	{\it  \qed (Proof of {\bf A}).}

\smallskip
Thus, $H$ is a reduced cancellative length-factorial transfer Krull monoid. If $H$ is factorial, then $H$ is Krull.
Suppose that $H$ is  not factorial. Then $H$ is not half-factorial. Let $G$ be the class group of $B$ and let $G_0 \subset G$ be the set of classes containing prime divisors. Since $H$ is not factorial, it is not half-factorial. Thus, Lemma \ref{2.3} implies that $B$ is length-factorial but not half-factorial. Theorem \ref{1.1} implies that every class of $G_0$ contains precisely one prime divisor.
Lemma \ref{3.1}.1 implies that there is a transfer homomorphism $\boldsymbol \beta \colon B \to \mathcal B (G_0)$. Since every class of $G_0$ contains precisely one prime divisor, Lemma \ref{3.1}.3 implies that  $G=[G_0\setminus\{g\}]$ for every $g\in G_0$. Since the composition of transfer homomorphisms is a transfer homomorphism again, we obtain a transfer homomorphism $\theta = \boldsymbol \beta \circ \theta' \colon H \to \mathcal B (G_0)$.

Let $P_0 \subset H$ be the set of prime elements of $H$ and $H_0 = \{ a \in H \colon p \nmid a \ \text{for all} \ p \in P_0 \}$. Since $H$ is cancellative, we obtain that $H = \mathcal F (P_0) \times H_0$.
 Since $G=[G_0\setminus\{g\}]$ for every $g\in G_0$,
 the only possible prime element of $\mathcal B(G_0)$ is the  sequence $S = 0 \in \mathcal F (G_0)$. Thus Lemma \ref{2.3}.2 implies that, if $P_0 \neq \emptyset$, then $\theta(P)=\{0\}$.
  Thus, we obtain that $\theta(H_0)=\mathcal B(G_0\setminus\{0\})$ and hence $\theta_{H_0}\colon H_0\rightarrow \mathcal B(G_0\setminus\{0\})$ is a surjective transfer homomorphism.
By Lemma \ref{2.3},  $\mathcal B(G_0\setminus\{0\})$ is length-factorial but  not half-factorial. By Corollary \ref{1.2}.1, $\mathcal A(G_0\setminus\{0\})$ is finite, say $\mathcal A(G_0\setminus\{0\})=\{U_1',\ldots, U_k', V_1',\ldots, V_{\ell}'\}$, $U_1' \cdot \ldots \cdot U_k'=V_1' \cdot \ldots \cdot V_{\ell}'$,  $k, \ell \in \N_{\ge 2}$, $k \ne \ell$, and  $U_i' \ne V_j'$ for all $i \in [1,k]$ and $j \in [1, \ell]$.

Assume to the contrary that $\theta_{H_0}$ is not injective. Then there exist $a,b\in H_0$ with $a\neq b$ such that $T =\theta(a)=\theta(b)$, say $T=W_1 \cdot \ldots \cdot W_r$, where $r\in \N$ and $W_1,\ldots, W_r\in \mathcal A(G_0\setminus\{0\})$. Then there exist $a_1,\ldots, a_r,b_1,\ldots,b_r\in \mathcal A(H_0)$ such that $a=a_1 \cdot \ldots \cdot  a_r$, $b=b_1 \cdot \ldots \cdot  b_r$, and $\theta(a_i)=\theta(b_i)=W_i$  for all $i\in [1,r]$. Since $a\neq b$, there exists $i_0\in [1,r]$, say $i_0=1$, such that $a_1\neq b_1$. After renumbering if necessary, we may suppose $W_1=U_1'$. Let $c\in H_0$ such that $\theta(c)=\prod_{i=2}^kU_i'$. Therefore, $\theta(a_1c)=\theta(b_1c)=U_1' \cdot \ldots \cdot  U_k'=V_1' \cdot \ldots \cdot  V_{\ell}'$, which implies that there exist $u_1,\ldots, u_{\ell}, v_1,\ldots,v_{\ell}$ such that $a_1c=u_1 \cdot \ldots \cdot  u_{\ell}$, $b_1c=v_1 \cdot \ldots \cdot v_{\ell}$, and $\theta(u_j)=\theta(v_j)=V_j'$ for all $j \in [1, \ell]$.
We observe that
$$a_1b_1c=a_1v_1 \cdot \ldots \cdot v_{\ell}=b_1u_1 \cdot \ldots \cdot u_{\ell}\,.$$
If there exists $j\in [1,\ell]$ such that $a_1=u_j$, then $U_1'=\theta(a_1)=\theta(u_j)=V_j'$, a contradiction. Thus $a_1b_1c$ has two distinct factorization of length $\ell+1$, a contradiction.
Therefore $\theta_{H_0}$ is injective, whence $H_0\cong \mathcal B(G_0\setminus\{0\})$ is Krull and so $H=\mathcal F(P_0)\times H_0$ is Krull.
\end{proof}

The monoids, discussed in Example \ref{2.2}.2, are reduced and length-factorial but not cancellative. Thus they cannot be transfer Krull by Corollary \ref{1.5}. We end with an example of transfer Krull monoids.

\smallskip
\begin{example} \label{2.7}
Let $R$ be a Bass ring and let $\mathsf T (R)$ be the monoid of isomorphism classes of torsion-free finitely generated $R$-modules, together with the operation induced by the direct sum (this is a monoid as discussed in Example \ref{2.2}.1). Then $\mathsf T (R)$ is a reduced transfer Krull monoid by \cite[Theorem 1.1]{Ba-Sm21a}. There are algebraic characterizations of when $\mathsf T (R)$ is
{factorial,  resp. half-factorial,   resp. cancellative} (see \cite[Proposition 3.13, Corollary 1.2, Remark 3.17]{Ba-Sm21a}). These characterizations show that $\mathsf T (R)$  is rarely cancellative, whence rarely Krull, and thus, by Corollary \ref{1.3}, it is rarely length-factorial.
\end{example}

\medskip
\noindent
{\bf Acknowledgement}. We thank the reviewers for their careful reading and for all their comments which led to the introduction of length-FF monoids.

\bigskip

\providecommand{\bysame}{\leavevmode\hbox to3em{\hrulefill}\thinspace}
\providecommand{\MR}{\relax\ifhmode\unskip\space\fi MR }
% \MRhref is called by the amsart/book/proc definition of \MR.
\providecommand{\MRhref}[2]{%
  \href{http://www.ams.org/mathscinet-getitem?mr=#1}{#2}
}
\providecommand{\href}[2]{#2}

\end{document}